\newcommand{\doublespace}
\newcommand*\oline[1]{%
	\vbox{%
		\hrule height 0.5pt
		\kern0.25ex
		\hbox{%
			\kern-0.1em
			\ifmmode#1\else\ensuremath{#1}\fi
			\kern-0.1em
		}
	}
}
\newtheorem{theorem}{Theorem}[section]
\newtheorem{lemma}[theorem]{Lemma}
\newtheorem{corollary}[theorem]{Corollary}
\newtheorem{claim}{Claim}
\begin{document}	
	
\baselineskip=0.30in

\begin{center}
{\Large \textbf{On Hyperbolic Sombor index of graphs}
} 
\\
		
\vspace{8mm}
        
{\bf Kinkar Chandra Das$^{1}$, Sultan Ahmad$^{2,*}$}

\vspace{9mm}

\baselineskip=0.20in

$^1${\it Department of Mathematics, Sungkyunkwan University,
Suwon 16419, Republic of Korea\/} \\
{\rm E-mail:} {\tt kinkardas2003@gmail.com}\\[2mm]

$^2${\it Department of Mathematics, School of Natural Sciences, National University of Sciences and Technology, H-12, Islamabad 44000, Pakistan\/} \\
{\rm E-mail:} {\tt raosultan58@gmail.com}

\vspace{4mm}
		
\end{center}
	
\vspace{5mm}
	
\baselineskip=0.20in

\begin{abstract}
The Hyperbolic Sombor index $HSO(G)$ of a graph $G$ is defined as
$$
HSO(G) = \sum_{v_i v_j \in E(G)} \frac{\sqrt{d_i^2 + d_j^2}}{\min\{d_i, d_j\}},
$$
where $d_i$ and $d_j$ denote the degrees of the vertices $v_i$ and $v_j$, respectively. This index was recently introduced by Barman et al. {\bf[Geometric approach to degree-based topological index: Hyperbolic Sombor index, MATCH Commun. Math. Comput. Chem. 95 (2026) 63–94]}, who explored some of its mathematical properties and applications. However, their work contains several inaccuracies that require correction. In this paper, we first identify and rectify the errors found in the earlier study. We then extend the investigation by establishing new mathematical results for the Hyperbolic Sombor index across various classes of graphs, including trees, unicyclic graphs, and bicyclic graphs. In addition, we derive some lower and upper bounds for $HSO(G)$ in terms of the number of edges, maximum degree and minimum degree, and we charcterize the graphs that attains these bounds. Finally, we conclude the paper by outlining potential directions for future research in this emerging area.


\end{abstract}
  
\baselineskip=0.25in
  
\section{Introduction}

All graphs considered in this paper are finite, simple, and connected. For a graph $G$, we denote its vertex set and edge set by $V(G)$ and $E(G)$, respectively. The order and size of $G$ are given by $|V(G)|=n$ and $|E(G)|=m$, respectively. The edge set $E(G)$ contains $e=v_iv_j$ if and only if the vertices $v_i$ and $v_j$ are adjacent. The graph modifications of edge removal and insertion are expressed as $G - \{v_i v_j\}$ and $G + \{v_i v_j\}$, respectively. The degree of a vertex $v_i \in V(G)$, written $d_i$, is the number of neighbors of $v_i$ in $G$, and the set of neighbors of $v_i$ is denoted by $N_G(v_i)$. The maximum and minimum degrees of $G$ are denoted by $\Delta(G)$ and $\delta(G)$, respectively. A graph $G$ is said to be regular if $\Delta(G)=\delta(G)$.
A vertex of a graph $G$ with degree one is called a pendent vertex, and the edge incident to it is called a pendent edge.
A graph with $m = n+c-1$ edges is referred to as a $c$-cyclic graph. In particular, for $c=0,1,2$, such graphs are called trees, unicyclic graphs, and bicyclic graphs, respectively. 

We use the standard notations $S_n$, $P_n$, $C_n$, and $K_n$ to denote the star, path, cycle and complete graphs of order $n$, respectively. The degree sequence of an $n$-vertex graph $G$ is the sequence $(d_1, d_2, \ldots, d_n)$, where $d_i$ denotes the degree of the $i$-th vertex of $G$, arranged in non-increasing order, that is, $d_1 \geq d_2 \geq \cdots \geq d_n$. Whenever the graph $G$ is clear from the context, we omit the $(G)$ part in the notation. For undefined terminology and notation, we refer to \cite{B1}.

Chemical graph theory, a branch of mathematical chemistry, employs graph-theoretical concepts to model and analyze molecular structures. In this framework, molecules are represented 
as graphs in which atoms correspond to vertices and chemical bonds to edges, providing a rigorous mathematical basis for investigating molecular properties. Molecular descriptors, commonly referred to as topological indices when defined on molecular graphs, are fundamental tools for virtual 
screening and for predicting physicochemical properties of molecules \cite{Basak,Desmecht}.  

Among these, degree-based topological indices play a central role due to their computational efficiency and predictive 
power, making them indispensable in quantitative structure–property relationship (QSPR) studies. Such indices are generally defined as a sum, over all edges of a graph, of quantities that depend on the degrees of the end-vertices.  

A significant development in this area arose from Gutman’s introduction of a geometric 
interpretation for the Sombor index \cite{gutman2021geometric}, which has inspired extensive research in both mathematics \cite{SAhmadKCDas,XY1,XY2,XY3,SD1,chen2022extremal,PM,JRJMR,liu,maitreyi2023minimum,shetty,wang2022relations} and chemistry \cite{redvzepovic2021chemical}, as well as comprehensive surveys 
\cite{liu2022sombor,BAR}. This geometric viewpoint has since led to a growing family of related indices, including 
the elliptic Sombor \cite{gutman2024geometric,SDF}, Euler–Sombor \cite{DJ1,IGutman,tang2024euler}, 
Hyperbolic Sombor \cite{BD1}, diminished Sombor \cite{K1,MovahediGutman}, and augmented Sombor \cite{KCD1} indices. In this paper, we focus on one such Hyperbolic geometric interpretation–based index, namely the Hyperbolic Sombor index ($HSO$). For a graph $G$, it is defined as
\begin{align*}
HSO(G) = \sum_{v_iv_j \in E(G)} \frac{\sqrt{d_i^{2}+d_j^{2}}}{\min\{d_i,\,d_j\}}.
\end{align*}
In \cite{BD1}, Barman and Das established several fundamental results concerning the Hyperbolic Sombor index $HSO(G)$. They derived lower bounds in terms of the size of a graph, obtained inequalities involving the Sombor index together with the maximum and minimum degree, and provided bounds using the first Zagreb index. Furthermore, they proved that among all graphs, the star graph maximizes and the cycle graph minimizes $HSO(G)$, while within the class of trees, the star graph maximizes and the path graph minimizes this index. They also examined its predictive power, structure sensitivity, and degeneracy in the context of alkane isomers. In a subsequent study \cite{BD2}, Barman and Das investigated the chemical significance of $HSO(G)$ through curvilinear regression with physicochemical properties of benzenoid hydrocarbons. 

The primary motivation of the present study arises from \cite{BD1}, as several of their results contain flaws. In this paper, we both highlight these flaws in the original results and establish new mathematical findings for the Hyperbolic Sombor index across various classes of graphs, including trees, unicyclic graphs, and bicyclic graphs. In addition, we derive some lower and upper bounds for $HSO(G)$ in terms of the number of edges, maximum degree and minimum degree, and we charcterize the graphs that attains these bounds.

The remainder of this paper is organized as follows. In Sect.~\ref{section3}, we point out flaws in the results of \cite{BD1}. In Sect.~\ref{section2}, we present new results for the Hyperbolic Sombor index for unicyclic and bicyclic graphs. In Sect.~\ref{section4}, we derive some lower and upper bounds for $HSO(G)$ in terms of the number of edges, maximum degree and minimum degree, and we characterize the graphs that attains these bounds. Finally, in Sect.~\ref{section5}, we conclude the paper.

\section{Flaws in the results of \cite{BD1}}\label{section3}
In this section, we point out the flaws in the results of \cite{BD1}. We begin with the following key observation. \\
{\bf Remark 1.} In the statements of the Theorems 1, 2 and 3 in \cite{BD1}, the authors mentioned that ``the equality holds if and only if $G$ is a complete graph". But these statements are not correct. 

\vspace*{2mm}

\noindent
{\bf Comment on Theorems 1 and 3 \cite{BD1}:} The inequalities presented in the theorems are correct; however, the statements regarding equality are not accurate. The correct formulation is: `` Equality holds if and only if $G$ is a regular graph". Both proofs rely on the following inequality:

\vspace*{2mm}

For any edge $v_iv_j\in E(G)$, we have $(d_i-d_j)^2\geq 0$ with equality if and only if $d_i=d_j$. Since $G$ is connected, equality in the statements of the theorems holds if and only if $d_1=d_2=\cdots=d_n$, that is, if and only if $G$ is a regular graph. 

\vspace*{2mm}

\noindent
{\bf Comment on Theorem 2 \cite{BD1}:} The inequality stated in Theorem 2 are correct, but the condition for equality is not. To ensure correctness and completeness, we provide a revised proof of the theorem. First, we define a graph $H$ as follows:

\vspace*{2mm}

Let $\Gamma$ be the class of connected graphs $H=(V,E)$ of order $n$, i.e., $|V(H)| = n$, with minimum degree $\delta$, such that the vertex set $V(H)$ is partitioned into two disjoint subsets $U$ and $W$, satisfying the following conditions:
\begin{itemize}
\item $V(H)=U \cup W$, and $U \cap W = \emptyset$,
\item there are no edges within $U$ $($i.e., $H[U]$ is an empty graph$)$,
\item edges may or may not exist within $W$,
\item for every vertex $v_i \in W$, the degree $d_i = \delta$.
\end{itemize}
Four graphs $H_1,\,H_2,\,H_3$ and $H_4$ have been shown in Fig.~\ref{fig5}. In particular, $H_1$ is a graph of order $7$, $H_2$ and $H_3$ are graphs of order $8$, while $H_4$ is a graph of order $9$. One can easily see that $H_i\in \Gamma$ for $1\leq i\leq 4$.
 \begin{figure}[ht!]
 \begin{center}
 \includegraphics[width=12.9cm]{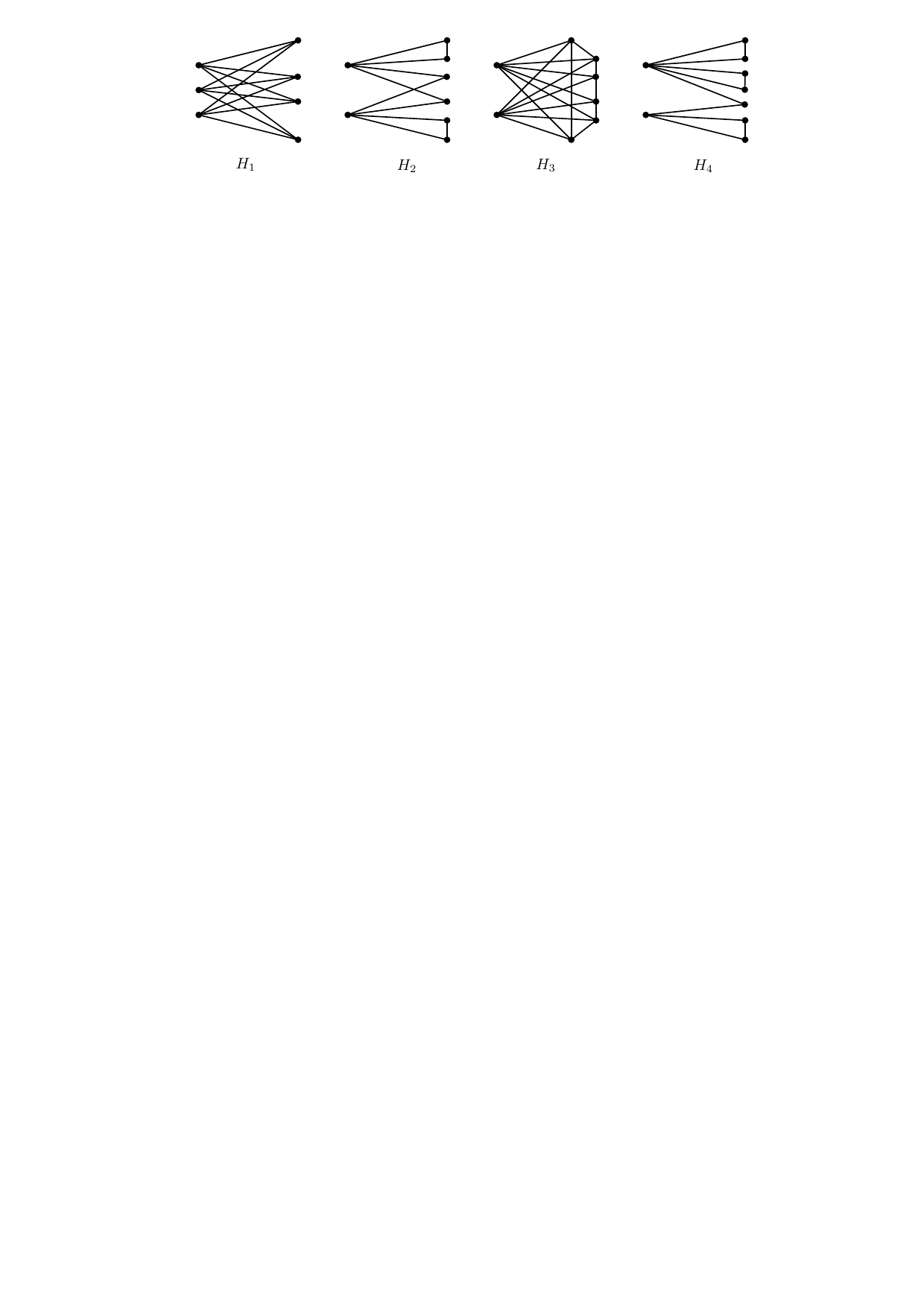}
\caption{\small \sl Four graphs $H_1,\,H_2,\,H_3$, and $H_4$.}
  \label{fig5}
 \end{center}
 \end{figure}

We now provide a revision of Theorem 2 from \cite{BD1}, where we establish a relationship between the Sombor index and the Hyperbolic Sombor index of any connected graph $G$.
\begin{theorem} Let $G$ be a simple connected graph with maximum degree $\Delta$, minimum degree $\delta$, and Sombor index $SO(G)$. Then 
$$\frac{1}{\Delta} \cdot SO(G) \leq HSO(G) \leq \frac{1}{\delta} \cdot SO(G).$$
Moreover, the left equality holds if and only if $G$ is a regular graph, and the right equality holds if and only if $G$ is a regular graph or $G\in \Gamma$.
\end{theorem}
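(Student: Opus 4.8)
The plan is to prove both inequalities edge by edge and then trace each equality case back to a structural condition on $G$. For every edge $v_iv_j\in E(G)$ the quantity $\min\{d_i,d_j\}$ clearly satisfies $\delta\le\min\{d_i,d_j\}\le\Delta$, and since $\sqrt{d_i^{2}+d_j^{2}}>0$ we get
$$\frac{\sqrt{d_i^{2}+d_j^{2}}}{\Delta}\le\frac{\sqrt{d_i^{2}+d_j^{2}}}{\min\{d_i,d_j\}}\le\frac{\sqrt{d_i^{2}+d_j^{2}}}{\delta}.$$
Summing over all edges gives $\frac{1}{\Delta}SO(G)\le HSO(G)\le\frac{1}{\delta}SO(G)$, which is the inequality part of the theorem.

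For the left equality, $HSO(G)=\frac{1}{\Delta}SO(G)$ holds exactly when $\min\{d_i,d_j\}=\Delta$ for every edge $v_iv_j$; as $\Delta$ is the maximum degree this forces $d_i=d_j=\Delta$ on every edge, and since $G$ is connected this propagates to every vertex, so $G$ is $\Delta$-regular. The converse is immediate. For the right equality, $HSO(G)=\frac{1}{\delta}SO(G)$ holds exactly when $\min\{d_i,d_j\}=\delta$ for every edge, i.e.\ every edge has at least one endpoint of degree $\delta$. If $G$ is regular this is automatic. If $G$ is not regular, take $W$ to be the set of all vertices of degree $\delta$ and $U=V(G)\setminus W$; then $U\neq\emptyset$, every vertex of $W$ has degree exactly $\delta$, and the condition says $W$ is a vertex cover of $G$, equivalently $U$ is independent, so $G[U]$ is empty and hence $G\in\Gamma$. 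Conversely, if $G\in\Gamma$ with partition $V(G)=U\cup W$, every edge lies inside $W$ or joins $U$ to $W$ (no edge lies inside $U$), so each edge has a degree-$\delta$ endpoint and equality holds.

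None of these steps is deep; the only point requiring care is the equality characterization for the upper bound, where I must recognize that the condition ``every edge has a $\delta$-endpoint'' is precisely membership in $\Gamma$ once the degenerate regular case is separated out, and must set up the partition so that $W$ consists of vertices of degree \emph{exactly} $\delta$ while $U$ is only required to be independent. It is also worth noting explicitly, for the sake of a clean statement, that a regular graph can itself be viewed as lying in $\Gamma$ with $U=\emptyset$, so the ``regular or $G\in\Gamma$'' phrasing is consistent even though the two cases overlap.
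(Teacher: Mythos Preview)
Your proof is correct and follows essentially the same approach as the paper: bound $\min\{d_i,d_j\}$ between $\delta$ and $\Delta$ edge by edge, sum, and then trace each equality condition back to a structural description of $G$, with the upper-bound equality case handled by partitioning $V(G)$ into the degree-$\delta$ vertices $W$ and the rest $U$. Your phrasing of ``every edge has a $\delta$-endpoint'' as ``$W$ is a vertex cover, equivalently $U$ is independent'' is a clean way to recognize the $\Gamma$ structure, but the underlying argument is identical to the paper's.
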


\begin{proof} Since $\delta\leq d_i\leq \Delta\,(1\leq i\leq n)$, we have $\frac{1}{\Delta}\leq \frac{1}{d_i}\leq \frac{1}{\delta}$.

\vspace*{2mm}

\noindent
{\bf Upper Bound:} Using the above, from the definition of the Hyperbolic Sombor index, we obtain
\begin{align*}
HSO(G)&=\sum\limits_{v_iv_j\in E(G)}\,\frac{\sqrt{d^{2}_i+d^{2}_j}}{\min\{d_i,\,d_j\}}\leq \sum\limits_{v_iv_j\in E(G)}\,\frac{\sqrt{d^{2}_i+d^{2}_j}}{\delta}=\frac{1}{\delta}\cdot SO(G).
\end{align*}
The above equality holds if and only if $\delta=\min\{d_i,\,d_j\}$ for any edge $v_iv_j\in E(G)$. 

\vspace*{2mm}

Suppose that the above equality holds. Then $\delta=\min\{d_i,\,d_j\}$ for any edge $v_iv_j\in E(G)$. Since $G$ is connected, we can partition the vertex set as $V(G)=U\cup W$ with $U\cap W=\emptyset$, where $W$ consists of all vertices of degree $\delta$, while $U$ contains the remaining vertices, each having degree strictly greater than $\delta$, and there are no edges between vertices within $U$ ( that is, the induced subgraph $G[U]$ is empty). If $|U|=0$, then all vertices have the same degree $\delta$, and thus $G$ is a regular graph. Otherwise, $|U|>0$. Then $G\cong H$, where $H\in \Gamma$.

\vspace*{2mm}

Conversely, let $G$ be an $r$-regular graph. Then $\delta=r$, $HSO(G)=\sqrt{2}\cdot |E(G)|$, and $SO(G)=\sqrt{2}\,r\cdot |E(G)|$. Thus we have $HSO(G)=\frac{1}{\delta}\cdot SO(G)$.

\vspace*{2mm}

Let $G\cong H$, where $H\in \Gamma$. From the definition, for any edge $v_iv_j\in E(G)$ with $d_i\geq d_j=\delta$, that is, $\displaystyle{\frac{\sqrt{d^{2}_i+d^{2}_j}}{\min\{d_i,\,d_j\}}=\frac{\sqrt{d^{2}_i+d^{2}_j}}{\delta}}$. Hence $$HSO(G)=\sum\limits_{v_iv_j\in E(G)}\,\frac{\sqrt{d^{2}_i+d^{2}_j}}{\min\{d_i,\,d_j\}}=\sum\limits_{v_iv_j\in E(G)}\,\frac{\sqrt{d^{2}_i+d^{2}_j}}{\delta}=\frac{1}{\delta}\cdot SO(G).$$

\vspace*{2mm}

\noindent
{\bf Lower Bound:} Similarly to the upper bound, we obtain:
\begin{align*}
HSO(G)&=\sum\limits_{v_iv_j\in E(G)}\,\frac{\sqrt{d^{2}_i+d^{2}_j}}{\min\{d_i,\,d_j\}}\geq \sum\limits_{v_iv_j\in E(G)}\,\frac{\sqrt{d^{2}_i+d^{2}_j}}{\Delta}=\frac{1}{\Delta}\cdot SO(G).
\end{align*}
The above equality holds if and only if $\Delta=\min\{d_i,\,d_j\}$ for any edge $v_iv_j\in E(G)$, that is, if and only if $\Delta=\min\{d_i,\,d_j\}\leq \Delta$ for any edge $v_iv_j\in E(G)$, that is, if and only if $d_i=d_j=\Delta$ for any edge $v_iv_j\in E(G)$, that is, if and only if $G$ is a regular graph as $G$ is connected. 
\end{proof}

 \begin{figure}[ht!]
 \begin{center}
 \includegraphics[height=3.9cm]{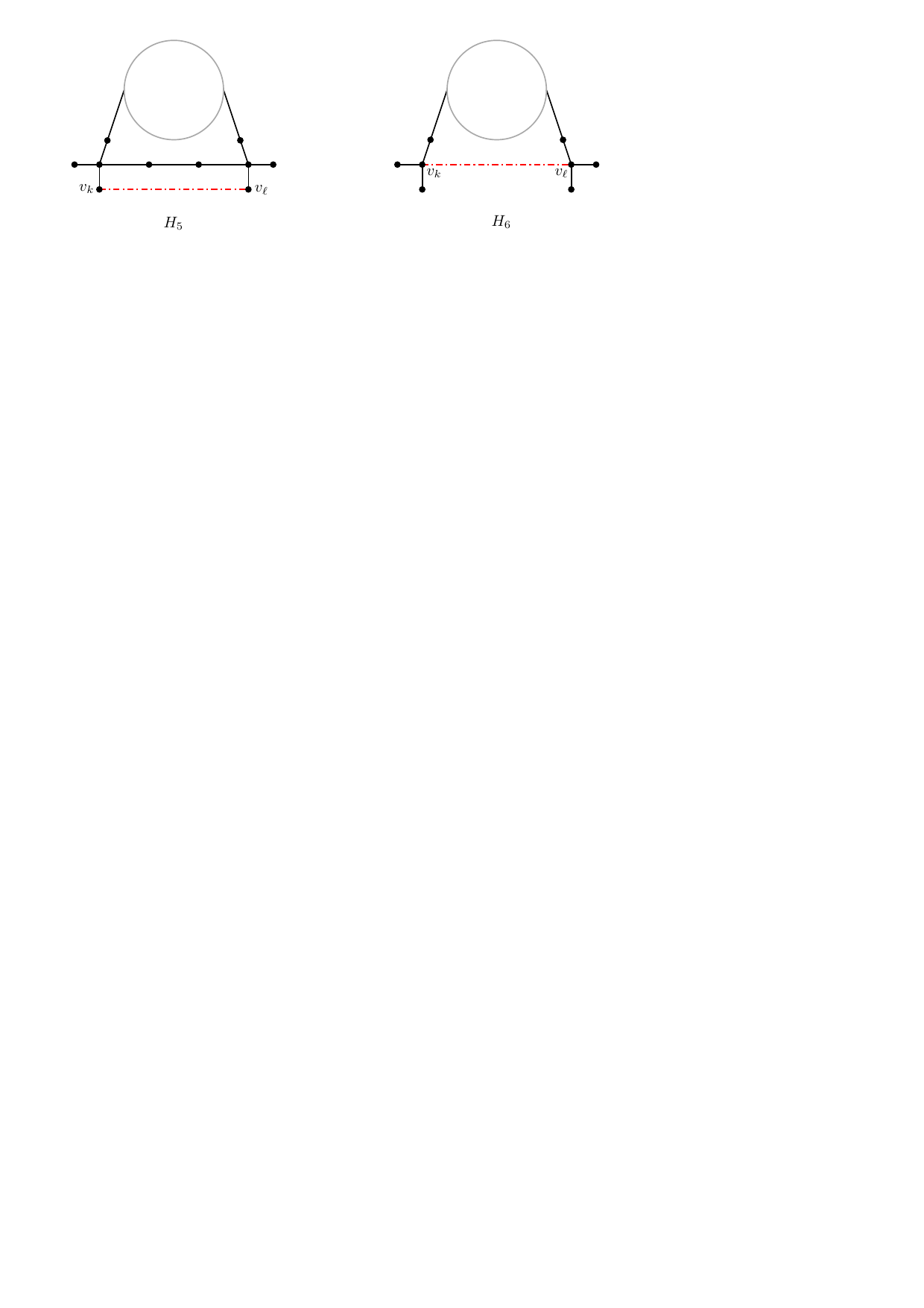}
 \caption{\small \sl Two graphs $H_5$ and $H_6$.}
  \label{fig1}
 \end{center}
 \end{figure}

\noindent
{\bf Remark 2.}
In the proof of Theorem 4 in \cite{BD1}, the authors stated that ``it is obvious that the value of $HSO(G)$ increases when we add edges to the graph $G$". Furthermore, they also mentioned that in the proof of Theorem 5 ``it is obvious that the value of $HSO(G)$ decreases when we remove edges from the graph $G$". However, these statements are not correct.

To illustrate this, consider the two graphs $H_{5}$ and $H_{6}$ shown in Fig.~\ref{fig1}. A direct calculation gives  
$$HSO(H_5)-HSO(H_5+v_kv_{\ell}) = 2\sqrt{17}-2\sqrt{5}-\sqrt{2} > 2.35 > 0,$$  
which shows that $HSO(H_5) > HSO(H_5+v_kv_{\ell})$. Similarly,  
$$HSO(H_6+v_kv_{\ell})-HSO(H_6) = 2\sqrt{5}+4\sqrt{17}+\sqrt{2}-2\sqrt{\tfrac{13}{4}}-4\sqrt{10} > 6.12 > 0,$$  
implying that \(HSO(H_6+v_kv_{\ell}) > HSO(H_6)\).  

\vspace*{2mm}

\noindent
{\bf Remark 3.} In Theorem 5 of \cite{BD1}, the proof of the inequality $HSO(T)\leq HSO(S_n)$ is not entirely rigorous. Although the argument is conceptually simple, the authors omitted a necessary justification: for any edge $v_iv_j\in E(T)$, $$\frac{\sqrt{d^{2}_i+d^{2}_j}}{\min\{d_i,\,d_j\}}\leq \sqrt{(n-1)^2+1}.$$ 
Without this bound, the inequality lacks completeness. 

\vspace*{2mm}

However, the proof of the inequality $HSO(T)\geq HSO(P_n)$, as presented in the same theorem, is incorrect. The authors attempted to establish this result using the principle of mathematical induction, but the method was not applied properly. Specifically, to prove $HSO(T_{k+1})\geq HSO(P_{k+1})$, one must use the induction hypothesis $HSO(T_{k})\geq HSO(P_{k})$, where $T_{k+1}$ and $T_k$ are any trees of order $k+1$ and $k$, respectively. Instead, the authors considered a very special kind of tree for $T_{k+1}$, rather than an arbitrary tree of order $k+1$, which invalidates the generality of their argument. Therefore, the proof method is flawed. This issue is similar to what occurs in Theorem 4 of \cite{BD1}, where the proof contains similar flaws. 

\vspace*{2mm}

Based on {\bf Remarks 2 and 3}, it is evident that the proofs of Theorems 4 and 5 in \cite{BD1} are incorrect. The concepts are wrong. We now revise the proof of the Theorem 5 in \cite{BD1}. For this first we prove the following result. 
\begin{lemma}\label{k1} Let $G$ be a graph with maximum degree $\Delta$. Then 

\vspace{1mm}

\noindent
$(i)$ For any pendent edge $v_iv_j\in E(G)$, we have
  $$\sqrt{5}\leq \frac{\sqrt{d^{2}_i+d^{2}_j}}{\min\{d_i,\,d_j\}}\leq \sqrt{\Delta^2+1}$$
with equality on the left if and only if $d_i=2>1=d_j$, and equality on the right if and only if $d_i=\Delta>1=d_j$.

\vspace{2mm}

\noindent
$(ii)$ For any non-pendent edge $v_iv_j\in E(G)$, we have
  $$\sqrt{2}\leq \frac{\sqrt{d^{2}_i+d^{2}_j}}{\min\{d_i,\,d_j\}}\leq \frac{1}{2}\,\sqrt{\Delta^2+4}$$
with equality on the left if and only if $d_i=d_j$, and equality on the right if and only if $d_i=\Delta>2=d_j$.
\end{lemma}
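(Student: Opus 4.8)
The plan is to reduce both parts to an elementary one‑variable estimate. For an edge $v_iv_j$ I may assume without loss of generality that $d_i\ge d_j$, so that $\min\{d_i,d_j\}=d_j$ and
$$\frac{\sqrt{d_i^{2}+d_j^{2}}}{\min\{d_i,d_j\}}=\frac{\sqrt{d_i^{2}+d_j^{2}}}{d_j}=\sqrt{\Big(\frac{d_i}{d_j}\Big)^{2}+1}.$$
Since $t\mapsto\sqrt{t^{2}+1}$ is strictly increasing on $[1,\infty)$, in each case it is enough to determine the range of the ratio $d_i/d_j$ over the admissible degree pairs and to record exactly when the endpoints of that range are attained.

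For part $(i)$, the edge $v_iv_j$ is pendent, so the endpoint of degree one is $v_j$; here the non‑pendent endpoint $v_i$ satisfies $2\le d_i\le\Delta$ (if both endpoints had degree one then $G\cong K_2$, in which case $\Delta=1$ and the displayed inequalities are not meant to apply, so I would note at the outset that $G$ carries a vertex of degree at least $2$). Then the ratio equals $\sqrt{d_i^{2}+1}$, and strict monotonicity of $f(x)=\sqrt{x^{2}+1}$ yields $\sqrt{5}=f(2)\le f(d_i)\le f(\Delta)=\sqrt{\Delta^{2}+1}$, with the left equality holding precisely when $d_i=2$ (and $d_j=1$) and the right equality precisely when $d_i=\Delta$ (and $d_j=1$, forcing $\Delta\ge2$).

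For part $(ii)$, the edge $v_iv_j$ is non‑pendent, so $d_j\ge2$; combined with $d_j\le d_i\le\Delta$ this gives $1\le d_i/d_j\le\Delta/2$. The inequality $d_i/d_j\ge1$ immediately yields $\sqrt{d_i^{2}+d_j^{2}}/d_j\ge\sqrt{2}$, with equality iff $d_i=d_j$. For the upper bound I would observe that $d_i/d_j$ is nondecreasing in $d_i$ and nonincreasing in $d_j$, so over the region $\{2\le d_j\le d_i\le\Delta\}$ its maximum value $\Delta/2$ is attained only at $(d_i,d_j)=(\Delta,2)$; substituting gives $\sqrt{d_i^{2}+d_j^{2}}/d_j\le\sqrt{\Delta^{2}/4+1}=\tfrac12\sqrt{\Delta^{2}+4}$, with equality iff $d_i=\Delta$ and $d_j=2$. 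The whole argument is essentially bookkeeping; the only points requiring a little care are the degenerate cases — excluding $K_2$ (equivalently $\Delta\le1$) in part $(i)$, and checking in part $(ii)$ that the extremal pair $(\Delta,2)$ is consistent with $d_i\ge d_j$ and, when $\Delta=2$, collapses to $d_i=d_j=2$ so that both bounds coincide with $\sqrt{2}$. I do not anticipate any genuine obstacle.
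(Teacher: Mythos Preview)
Your proof is correct and follows essentially the same approach as the paper: reduce the quotient to $\sqrt{(d_i/d_j)^2+1}$ via the assumption $d_i\ge d_j$, then bound the ratio $d_i/d_j$ over the admissible degree pairs using monotonicity. Your treatment is in fact slightly more careful than the paper's, since you explicitly flag the degenerate cases $G\cong K_2$ in part~$(i)$ and $\Delta=2$ in part~$(ii)$, which the paper handles only implicitly by writing ``$1=d_j<d_i\le\Delta$'' without comment.
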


\begin{proof} Let $v_iv_j$ be any edge in $G$ with $d_i\geq d_j$. Then  
\begin{align}
\frac{\sqrt{d^{2}_i+d^{2}_j}}{\min\{d_i,\,d_j\}}=\frac{\sqrt{d^{2}_i+d^{2}_j}}{d_j}=\sqrt{\frac{d^2_i}{d^2_j}+1}.\label{e1}
\end{align}

\vspace{1mm}
\noindent
$(i)$ For any pendent edge $v_iv_j\in E(G)\,(1=d_j<d_i\leq \Delta)$, we have
   $$\sqrt{5}\leq \sqrt{\frac{d^2_i}{d^2_j}+1}=\sqrt{d^2_i+1}\leq \sqrt{\Delta^2+1},$$
where $\Delta$ is the maximum degree in $G$. Using the above result in (\ref{e1}), we obtain
  $$\sqrt{5}\leq\frac{\sqrt{d^{2}_i+d^{2}_j}}{\min\{d_i,\,d_j\}}\leq \sqrt{\Delta^2+1}.$$

\vspace{2mm}
\noindent
$(ii)$ For any non-pendent edge $v_iv_j\in E(G)\,(2\leq d_j\leq d_i\leq \Delta)$, we have
   $$\sqrt{2}\leq \sqrt{\frac{d^2_i}{d^2_j}+1}\leq \frac{1}{2}\,\sqrt{\Delta^2+4},$$
where $\Delta$ is the maximum degree in $G$. Using the above result in (\ref{e1}), we obtain
  $$\sqrt{2}\leq\frac{\sqrt{d^{2}_i+d^{2}_j}}{\min\{d_i,\,d_j\}}\leq \frac{1}{2}\,\sqrt{\Delta^2+4}.$$
This completes the proof of the result.
\end{proof}

\begin{corollary}\label{tk1} Let $G$ be a graph of order $n$. Then

\vspace{1mm}

\noindent
$(i)$ For any pendent edge $v_iv_j\in E(G)$, we have
  $$\sqrt{5}\leq \frac{\sqrt{d^{2}_i+d^{2}_j}}{\min\{d_i,\,d_j\}}\leq \sqrt{(n-1)^2+1}$$
with equality on the left if and only if $d_i=2>1=d_j$, and equality on the right if and only if $d_i=n-1>1=d_j$.

\vspace{2mm}

\noindent
$(ii)$ For any non-pendent edge $v_iv_j\in E(G)$, we have
  $$\sqrt{2}\leq \frac{\sqrt{d^{2}_i+d^{2}_j}}{\min\{d_i,\,d_j\}}\leq \frac{1}{2}\,\sqrt{(n-1)^2+4}$$
with equality on the left if and only if $d_i=d_j$, and equality on the right if and only if $d_i=n-1>2=d_j$.
\end{corollary}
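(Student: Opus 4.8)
The plan is to obtain Corollary~\ref{tk1} as an immediate consequence of Lemma~\ref{k1} combined with the elementary bound $\Delta(G)\le n-1$, valid for every simple graph $G$ on $n$ vertices. Throughout I keep the normalization $d_i\ge d_j$ used in the proof of Lemma~\ref{k1}, so that $\frac{\sqrt{d_i^2+d_j^2}}{\min\{d_i,d_j\}}=\sqrt{d_i^2/d_j^2+1}$ as in (\ref{e1}).

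First, for the lower bounds: the left-hand inequalities in parts $(i)$ and $(ii)$, namely $\sqrt5$ for a pendent edge and $\sqrt2$ for a non-pendent edge, are exactly those of Lemma~\ref{k1} and involve neither $\Delta$ nor $n$; hence they transfer verbatim, together with their equality cases ($d_i=2>1=d_j$ in $(i)$ and $d_i=d_j$ in $(ii)$), and nothing further needs to be checked. For the upper bounds I would invoke that $t\mapsto\sqrt{t^2+1}$ and $t\mapsto\tfrac12\sqrt{t^2+4}$ are strictly increasing on $[1,\infty)$; since $\Delta\le n-1$, this gives $\sqrt{\Delta^2+1}\le\sqrt{(n-1)^2+1}$ and $\tfrac12\sqrt{\Delta^2+4}\le\tfrac12\sqrt{(n-1)^2+4}$, which chained with the upper estimates of Lemma~\ref{k1} yields the two claimed bounds.

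It remains to pin down the equality cases for the upper bounds, and this is the only place where a little care is needed: one must \emph{chain} the two equalities rather than assert them independently. Equality on the right in $(i)$ forces simultaneously equality in the right bound of Lemma~\ref{k1}$(i)$, i.e.\ $d_i=\Delta>1=d_j$, and equality in $\sqrt{\Delta^2+1}=\sqrt{(n-1)^2+1}$, i.e.\ $\Delta=n-1$; combining these, $d_i=\Delta=n-1>1=d_j$. Conversely, if $d_i=n-1>1=d_j$ then $\min\{d_i,d_j\}=1$ and substituting into (\ref{e1}) gives the value $\sqrt{(n-1)^2+1}$ directly, which by $\Delta\le n-1$ is indeed the maximum. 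The non-pendent case is identical with $1$ replaced by $2$: equality on the right in $(ii)$ holds iff $d_i=n-1>2=d_j$. I do not anticipate any genuine obstacle here; the argument is simply the monotone substitution $\Delta\le n-1$ fed into Lemma~\ref{k1}, the subtlest point being the correct bookkeeping of the ``only if'' directions of the equality statements.
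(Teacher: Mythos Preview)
Your proposal is correct and is precisely the intended argument: the paper states Corollary~\ref{tk1} without proof immediately after Lemma~\ref{k1}, the implicit justification being exactly the substitution $\Delta\le n-1$ into the bounds of Lemma~\ref{k1} together with monotonicity, and your careful chaining of the two equality conditions for the upper bounds is the right way to make that implicit step rigorous.
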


\begin{theorem} \label{kh1} Let $T$ be a tree of order $n$. Then 
$$2\,\sqrt{5}+(n-3)\,\sqrt{2}\leq HSO(T)\leq (n-1)\,\sqrt{n^2-2n+2}$$ 
with left equality if and only if $T\cong P_n$ and the right equality if and only if $T\cong S_n$. 
\end{theorem}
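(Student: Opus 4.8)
The plan is to deduce both inequalities directly from the edge-wise estimates in Corollary~\ref{tk1}, together with two elementary facts about a tree $T$ of order $n$: it has exactly $n-1$ edges, and (for $n\ge 3$) it has at least two pendent vertices. Throughout I would assume $n\ge 3$, since for $n\le 2$ the stated lower bound already fails (e.g.\ $HSO(P_2)=\sqrt2$, which is below $2\sqrt5-\sqrt2$).

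For the \emph{upper bound} I would split $E(T)$ into its pendent and its non-pendent edges. By Corollary~\ref{tk1}, a pendent edge contributes at most $\sqrt{(n-1)^2+1}$ and a non-pendent edge at most $\tfrac12\sqrt{(n-1)^2+4}$. A one-line computation gives $\tfrac12\sqrt{(n-1)^2+4}\le \sqrt{(n-1)^2+1}$ --- equivalently $0\le 3(n-1)^2$ --- with strict inequality when $n\ge 2$. Hence every edge contributes at most $\sqrt{(n-1)^2+1}=\sqrt{n^2-2n+2}$, and summing over all $n-1$ edges yields $HSO(T)\le (n-1)\sqrt{n^2-2n+2}$. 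For the equality case, every edge must attain this maximum; since a non-pendent edge is strictly below it, $T$ can contain no non-pendent edge, i.e.\ $T\cong S_n$, and then Corollary~\ref{tk1}$(i)$ forces each edge to join a vertex of degree $n-1$ to a pendent one --- which holds in $S_n$. Conversely a direct computation gives $HSO(S_n)=(n-1)\sqrt{(n-1)^2+1}$, so the bound is sharp exactly at $S_n$.

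For the \emph{lower bound} I would let $p\ge 2$ denote the number of pendent vertices of $T$; then $T$ has exactly $p$ pendent edges and $n-1-p\ge 0$ non-pendent edges. The left inequalities of Corollary~\ref{tk1} give
\[
HSO(T)\ \ge\ p\sqrt5+(n-1-p)\sqrt2\ =\ (n-1)\sqrt2+p\,(\sqrt5-\sqrt2),
\]
and since $\sqrt5>\sqrt2$ and $p\ge 2$ this is at least $(n-1)\sqrt2+2(\sqrt5-\sqrt2)=2\sqrt5+(n-3)\sqrt2$. Equality forces $p=2$, hence $T$ is a path $P_n$; one also needs every pendent edge to have end-degrees $\{2,1\}$ and every non-pendent edge to have equal end-degrees, both automatic in $P_n$ for $n\ge 3$. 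A direct evaluation confirms $HSO(P_n)=2\sqrt5+(n-3)\sqrt2$, so the lower bound is attained precisely at $P_n$.

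The argument has no genuinely hard step --- it is Corollary~\ref{tk1} plus bookkeeping --- so the main thing to be careful about is the equality analysis: I must verify that the non-pendent edge bound is \emph{strictly} dominated by the pendent edge bound (this is what forces the maximizer to be the star rather than merely ``a tree all of whose edges are pendent''), and that the condition $p=2$ really characterizes the path and that $P_n$ then meets every local equality condition of Corollary~\ref{tk1}.
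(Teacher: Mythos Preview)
Your proposal is correct and follows essentially the same route as the paper: both proofs use the edge-wise bounds of Corollary~\ref{tk1}, split into pendent and non-pendent edges, and for the lower bound perform the identical computation $HSO(T)\ge p\sqrt5+(n-1-p)\sqrt2\ge 2\sqrt5+(n-3)\sqrt2$ using $p\ge 2$. Your upper-bound argument is slightly more explicit than the paper's (you spell out the comparison $\tfrac12\sqrt{(n-1)^2+4}<\sqrt{(n-1)^2+1}$, whereas the paper invokes Corollary~\ref{tk1} without separating the two edge types), and your remark that $n\ge 3$ is implicitly needed is a useful observation the paper omits.
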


\begin{proof} {\bf Upper Bound:} Let $v_iv_j$ be any edge in $T$ with $d_i\geq d_j$. Then by Corollary \ref{tk1}, we obtain  
   $$\frac{\sqrt{d^{2}_i+d^{2}_j}}{\min\{d_i,\,d_j\}}\leq \sqrt{(n-1)^2+1}.$$
Thus we have
\begin{align*}
HSO(T)&=\sum\limits_{v_iv_j\in E(T)}\,\frac{\sqrt{d^{2}_i+d^{2}_j}}{\min\{d_i,\,d_j\}}\leq (n-1)\,\sqrt{n^2-2n+2}.
\end{align*}
Moreover, the above equality holds if and only if $d_i=n-1>1=d_j$ for any edge $v_iv_j\in E(T)$, that is, if and only if $T\cong S_n$. 

\vspace*{3mm}

\noindent
{\bf Lower Bound:} Let $p$ be the number of pendent vertices in $T$. Then $p\geq 2$. By Corollary \ref{tk1}, we obtain
\begin{align*}
HSO(T)&=\sum\limits_{v_iv_j\in E(T),\atop d_i\geq d_j=1}\,\frac{\sqrt{d^{2}_i+d^{2}_j}}{\min\{d_i,\,d_j\}}+\sum\limits_{v_iv_j\in E(T),\atop d_i\geq d_j\geq 2}\,\frac{\sqrt{d^{2}_i+d^{2}_j}}{\min\{d_i,\,d_j\}}\\[3mm]
&\geq p\,\sqrt{5}+(n-p-1)\,\sqrt{2}\\[2mm]
&=(n-1)\,\sqrt{2}+p\,(\sqrt{5}-\sqrt{2})\\[2mm]
&\geq (n-1)\,\sqrt{2}+2\,(\sqrt{5}-\sqrt{2})=2\,\sqrt{5}+(n-3)\,\sqrt{2}.
\end{align*}
Moreover, the equality holds if and only if $p=2$ with $d_i=d_j$ for any non-pendent edge $v_iv_j\in E(T)$, and $d_i=2>1=d_j$ for any pendent edge $v_iv_j\in E(T)$, that is, if and only if $T\cong P_n$.
\end{proof}

In the next result, we revisit Theorem 4 from \cite{BD1} and offer a revised proof for one of its parts.
\begin{theorem} Let $G$ be a connected graph of order $n$. Then $HSO(G)\geq \sqrt{2}\,n$ with equality if and only if $G\cong C_n$.
\end{theorem}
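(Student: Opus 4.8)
The plan is to bound each edge contribution from below by $\sqrt{2}$ and to count the edges carefully, but this alone gives $HSO(G) \ge \sqrt{2}\,m$, which is at least $\sqrt{2}\,n$ only when $m \ge n$, i.e.\ when $G$ is \emph{not} a tree. So the argument must split into two cases according to whether $G$ is a tree or contains a cycle.

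First I would handle the non-tree case. If $G$ is connected and not a tree, then $m \ge n$, and since every edge $v_iv_j$ satisfies $\frac{\sqrt{d_i^2+d_j^2}}{\min\{d_i,d_j\}} \ge \sqrt{2}$ by part $(ii)$ of Lemma~\ref{k1} (or directly, as the minimum of $\sqrt{(d_i/d_j)^2+1}$ over $d_i \ge d_j$ is $\sqrt{2}$, attained at $d_i = d_j$), we get $HSO(G) \ge \sqrt{2}\,m \ge \sqrt{2}\,n$, with the first inequality strict unless every edge joins two vertices of equal degree and the second strict unless $m = n$. Equality would force $G$ regular with $m = n$, hence $2$-regular and connected, i.e.\ $G \cong C_n$; and conversely $HSO(C_n) = \sqrt{2}\,n$ is immediate.

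Next, the tree case: if $G$ is a tree then $m = n-1$ and I must show the \emph{strict} inequality $HSO(G) > \sqrt{2}\,n$. Here I would invoke Theorem~\ref{kh1}, which gives $HSO(T) \ge 2\sqrt{5} + (n-3)\sqrt{2}$. It then suffices to check $2\sqrt{5} + (n-3)\sqrt{2} > \sqrt{2}\,n$, i.e.\ $2\sqrt{5} > 3\sqrt{2}$, which holds since $20 > 18$. Thus no tree attains the bound, and combining the two cases establishes $HSO(G) \ge \sqrt{2}\,n$ with equality iff $G \cong C_n$.

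The only mildly delicate point — the "main obstacle," such as it is — is recognizing that the naive edge-counting bound $HSO(G) \ge \sqrt{2}\,m$ is too weak for trees and that one must route through the already-established lower bound of Theorem~\ref{kh1} (or an equivalent direct estimate on trees) to close that case; everything else is routine.
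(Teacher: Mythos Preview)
Your proposal is correct and follows essentially the same approach as the paper: split into the tree case (handled via Theorem~\ref{kh1} and the numerical check $2\sqrt{5}>3\sqrt{2}$) and the non-tree case (handled via the per-edge lower bound $\sqrt{2}$ together with $m\ge n$), with the equality analysis forcing $G\cong C_n$. The only cosmetic difference is the order in which you present the two cases.
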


\begin{proof} Let $m$ be the number of edges in $G$. Since $G$ is connected, we have $m\geq n-1$. If $m=n-1$, then $G$ is a tree $T$. By Theorem \ref{kh1}, we have 
  $$HSO(T)\geq 2\,\sqrt{5}+(n-3)\,\sqrt{2}$$ 
with equality if and only if $T\cong P_n$. Using this result, we obtain
 $$HSO(G)=HSO(T)=\sum\limits_{v_iv_j\in E(T)}\,\frac{\sqrt{d^{2}_i+d^{2}_j}}{\min\{d_i,\,d_j\}}\geq 2\,\sqrt{5}+(n-3)\,\sqrt{2}>\sqrt{2}\,n.$$
The inequality strictly holds. Otherwise, $m\geq n$. By Lemma \ref{k1}, we obtain 
\begin{align*}
HSO(G)=\sum\limits_{v_iv_j\in E(T)}\,\frac{\sqrt{d^{2}_i+d^{2}_j}}{\min\{d_i,\,d_j\}}\geq \sqrt{2}\,m\geq \sqrt{2}\,n.
\end{align*}
Since $G$ is connected, the above equality holds if and only if $m=n$ and $d_i=d_j$ for any edge $v_iv_j\in E(G)$, that is, if and only if $G$ is a regular graph with $m=n$, that is, if and only if $G\cong C_n$.
\end{proof}

\noindent
{\bf Remark 4.} The other part of Theorem 4 of \cite{BD1}, $HSO(G)\leq HSO(S_n)$ is still open.

\section{Extremal Results on Unicyclic and Bicyclic Graphs with Respect to the Hyperbolic Sombor Index}\label{section2}
This section deals with two classes of graphs: unicyclic graphs, and bicyclic graphs. In order to establish our main results, we first present the following auxiliary result. 
\begin{lemma} \label{k2} Let 
  $$f(x)=x\,\sqrt{(x+2)^2+1}+(n-x-3)\,\sqrt{(n-x-1)^2+1},~1\leq x\leq \Big\lfloor\frac{n-3}{2}\Big\rfloor.$$
Then $f(x)$ is a decreasing function on $1\leq x\leq \Big\lfloor\frac{n-3}{2}\Big\rfloor$.
\end{lemma}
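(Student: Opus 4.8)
The plan is to show that $f(x+1) - f(x) < 0$ for all integers $x$ in the relevant range, or more robustly, to treat $x$ as a continuous variable on $[1, \lfloor (n-3)/2 \rfloor]$ and show $f'(x) < 0$ there. Writing $g(t) = t\sqrt{(t+2)^2+1}$ and $h(t) = t\sqrt{t^2+1}$, we have $f(x) = g(x) + h(n-x-3) \cdot \frac{n-x-3}{n-x-3}$; more precisely $f(x) = g(x) + (n-x-3)\sqrt{(n-x-1)^2+1}$. Substituting $y = n-x-3$ (so the second term is $y\sqrt{(y+2)^2+1} = g(y)$), we see $f(x) = g(x) + g(n-x-3)$, a pleasingly symmetric form: $f$ is a sum of the same function $g$ evaluated at $x$ and at its ``reflection'' $n-x-3$. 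Since $1 \le x \le \lfloor (n-3)/2 \rfloor$, we have $x \le n-x-3$, i.e. $x$ stays on the left of the axis of symmetry.

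The key step is then to understand the monotonicity of $g(t) = t\sqrt{(t+2)^2+1}$. I would compute
$$
g'(t) = \sqrt{(t+2)^2+1} + \frac{t(t+2)}{\sqrt{(t+2)^2+1}} = \frac{(t+2)^2 + 1 + t(t+2)}{\sqrt{(t+2)^2+1}} = \frac{2t^2 + 6t + 5}{\sqrt{(t+2)^2+1}},
$$
which is strictly positive for all $t \ge 0$ (the numerator $2t^2+6t+5$ has negative discriminant). Hence $g$ is strictly increasing on $[0,\infty)$. Now $f'(x) = g'(x) - g'(n-x-3)$, so the claim $f'(x) < 0$ is equivalent to $g'(x) < g'(n-x-3)$, which would follow if $g'$ itself is increasing on the relevant range — i.e. if $g$ is convex there. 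So the real work is to show $g'$ is increasing, equivalently $g'' > 0$, on $[1, \infty)$ (or at least on the interval swept out by both $x$ and $n-x-3$, which is contained in $[1, n-4]$).

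The main obstacle is the convexity computation for $g$. Differentiating $g'(t) = (2t^2+6t+5)(t^2+4t+5)^{-1/2}$ gives, after clearing denominators, a numerator that is a polynomial in $t$; I expect it to be a cubic or quartic with all positive coefficients (or otherwise manifestly positive for $t \ge 1$), so that $g'' > 0$ on $[1,\infty)$. Concretely,
$$
g''(t) = \frac{(4t+6)(t^2+4t+5) - (2t^2+6t+5)(t+2)}{(t^2+4t+5)^{3/2}},
$$
and expanding the numerator should yield $2t^3 + \text{(positive lower-order terms)}$, hence positivity. Once $g'' > 0$ on $[1,\infty)$ is established, $g'$ is strictly increasing there; combined with $x \le n-x-3$ (strict unless $x = n-x-3$, in which case $f'(x)=0$ but this is an endpoint/measure-zero point and does not affect strict decrease across the integer lattice) and $x, n-x-3 \ge 1$, we get $g'(x) \le g'(n-x-3)$ with strict inequality for $x < n-x-3$, whence $f'(x) < 0$ on the interior and $f$ is strictly decreasing on $[1, \lfloor(n-3)/2\rfloor]$. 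For the discrete statement one simply notes $f(x+1) - f(x) = \int_x^{x+1} f'(t)\,dt < 0$. I would present the argument via the symmetric form $f(x) = g(x) + g(n-x-3)$ since it makes the structure transparent and reduces everything to the single-variable analysis of $g$.
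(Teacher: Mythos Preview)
Your proposal is correct. The symmetric rewriting $f(x)=g(x)+g(n-x-3)$ with $g(t)=t\sqrt{(t+2)^2+1}$ is the right observation, and the numerator of $g''(t)$ expands to $2t^3+12t^2+27t+20>0$, so $g$ is strictly convex on $[0,\infty)$; together with $x\le n-x-3$ this gives $f'(x)=g'(x)-g'(n-x-3)\le 0$, as you outline. The one place you hedge (``expanding the numerator should yield $2t^3+\text{positive terms}$'') is easily closed.

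The paper reaches the same inequality $g'(x)\le g'(n-x-3)$ but by a different route: it writes out $f'(x)$ as the difference of the two expressions $\frac{2(x+2)(x+1)+1}{\sqrt{(x+2)^2+1}}$ and $\frac{2(n-x-1)(n-x-2)+1}{\sqrt{(n-x-1)^2+1}}$, then proves by direct algebraic expansion (their Claim~1) that
\[
\big[(x+2)^2+1\big]\big[2(n-x-1)(n-x-2)+1\big]\ \ge\ \big[(n-x-1)^2+1\big]\big[2(x+2)(x+1)+1\big],
\]
and combines this with the obvious inequality $2(n-x-1)(n-x-2)+1\ge 2(x+2)(x+1)+1$ to clear the square roots. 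Your convexity argument is more conceptual and avoids the somewhat opaque cross-multiplication step; the paper's approach, by contrast, never names the symmetry $f(x)=g(x)+g(n-x-3)$ and treats the two terms of $f'$ asymmetrically until the final algebra forces the comparison. Both ultimately verify the same pointwise inequality, but yours makes clear \emph{why} it holds.
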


\begin{proof} Since
$$f(x)=x\,\sqrt{(x+2)^2+1}+(n-x-3)\,\sqrt{(n-x-1)^2+1},$$
we have 
\begin{align*}
&f'(x)\\
&=\frac{(x+2)^2+1+x\,(x+2)}{\sqrt{(x+2)^2+1}}-\frac{(n-x-1)^2+1+(n-x-1)\,(n-x-3)}{\sqrt{(n-x-1)^2+1}}.    
\end{align*}
Since $1\leq x\leq \Big\lfloor\frac{n-3}{2}\Big\rfloor$, we have $n-x-3\geq x$ and $n-x-1\geq x+2$. We now prove the following claim.
\begin{claim}\label{c1} 
\begin{align*}
 &\Big[(x+2)^2+1\Big]\,\Big[2\,(n-x-1)\,(n-x-2)+1\Big]\\[3mm]
 &~~~~~~~~~~~~~~~~~~~~~~\geq\Big[(n-x-1)^2+1\Big]\,\Big[2\,(x+2)\,(x+1)+1\Big].   
\end{align*}
\end{claim}

\vspace*{3mm}

\noindent
{\bf Proof of Claim \ref{c1}.} We have to prove that
\begin{align*}
&2\,(x+2)^2\,(n-x-1)\,(n-x-2)+2\,(n-x-1)\,(n-x-2)+(x+2)^2\\[2mm]
 &~~~~~~\geq 2\,(x+2)\,(x+1)\,(n-x-1)^2+2\,(x+2)\,(x+1)+(n-x-1)^2,
\end{align*}
that is,
\begin{align*}
&2\,(x+2)\,(n-x-1)\,\Big[(x+2)\,(n-x-2)-(x+1)\,(n-x-1)\Big]\\[2mm]
&~~~~~~~~~~~~~~~~~~~~~~~~~~~~+(n-x-1)\,(n-x-3)-(x+2)x\geq 0,
\end{align*}
that is,
\begin{align*}
&2\,(x+2)\,(n-x-1)\,(n-2x-3)+(n-x-1)\,(n-x-3)-(x+2)x\\[2mm]&
~~~~~~~~~~~~~~~~~~~~~~~~~~~~~~~~~~~~~~~~~~~~~~~~~~~\geq 0,
\end{align*}
which is true as $n-x-3\geq x$ and $n-x-1\geq x+2$. Hence
\begin{align*}
&\Big[(x+2)^2+1\Big]\,\Big[2\,(n-x-1)\,(n-x-2)+1\Big]\\[2mm]&~~~~~~~~~~~~~~~~~~~~~~~~~~~~~\geq\Big[(n-x-1)^2+1\Big]\,\Big[2\,(x+2)\,(x+1)+1\Big].    
\end{align*}
This proves the {\bf Claim \ref{c1}}.

\vspace*{2mm}
Again since $n-x-2\geq x+1$ and $n-x-1\geq x+2$, we obtain
 $$2\,(n-x-1)\,(n-x-2)+1\geq 2\,(x+2)\,(x+1)+1.$$
From {\bf Claim \ref{c1}} with the above result, we obtain
\begin{align*}
&\Big[(x+2)^2+1\Big]\,\Big[2\,(n-x-1)\,(n-x-2)+1\Big]^2
\\[2mm]
&~~~~~~~~~~~~~~~~~~~~~~~~~~\geq\Big[(n-x-1)^2+1\Big]\,\Big[2\,(x+2)\,(x+1)+1\Big]^2,   
\end{align*}
that is,
\begin{align*}
&\sqrt{(x+2)^2+1}\,\Big[2\,(n-x-1)\,(n-x-2)+1\Big]\\[2mm]
&~~~~~~~~~~~~~~~\geq\sqrt{(n-x-1)^2+1}\,\Big[2\,(x+2)\,(x+1)+1\Big],    
\end{align*}
that is,
$$\frac{(x+2)^2+1+x\,(x+2)}{\sqrt{(x+2)^2+1}}\leq\frac{(n-x-1)^2+1+(n-x-1)\,(n-x-3)}{\sqrt{(n-x-1)^2+1}}.$$
Using the above result, we conclude that $f'(x)\leq 0$. This proves the result.
\end{proof}

 \begin{figure}[ht!]
 \begin{center}
 \includegraphics[height=4cm]{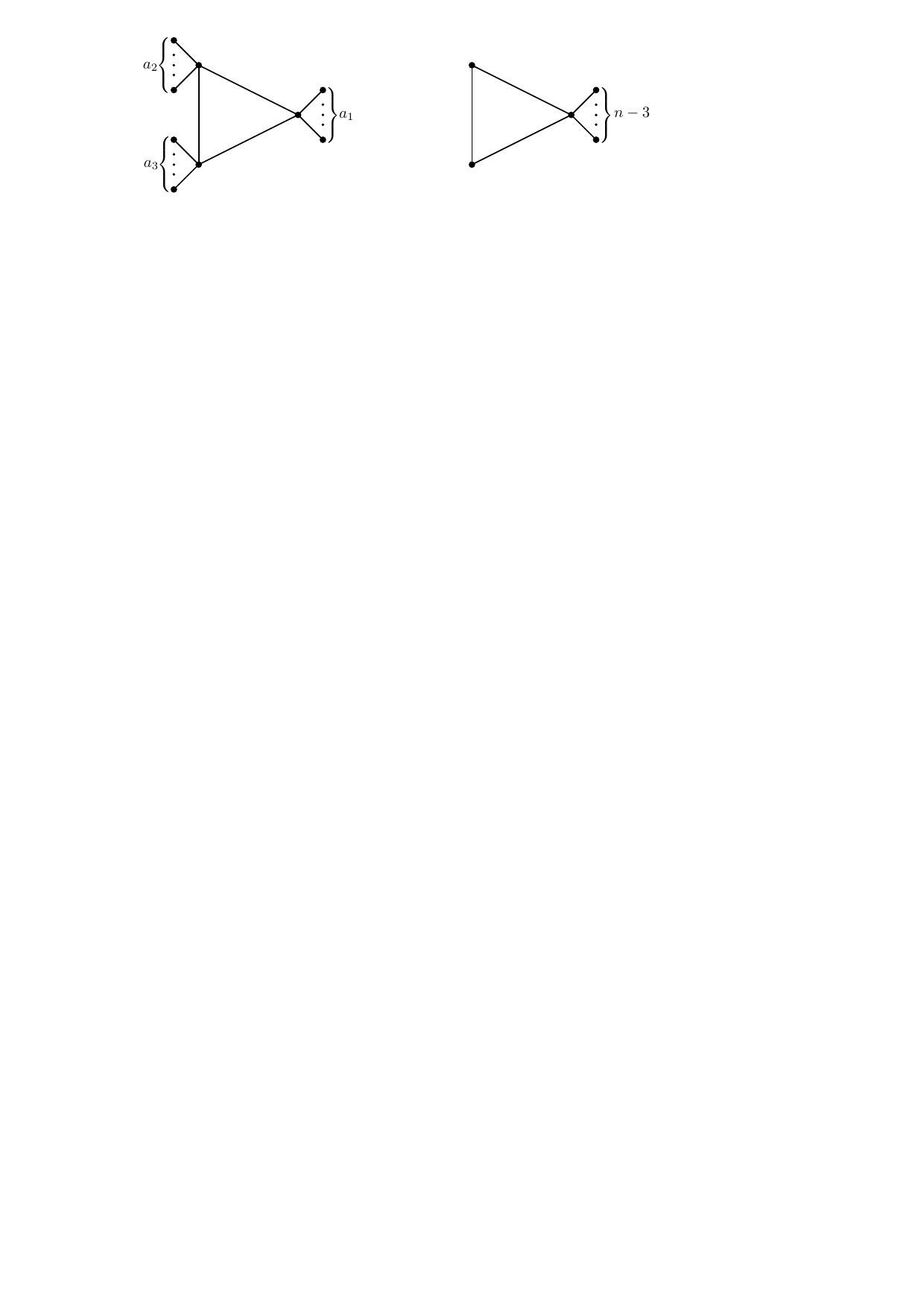}\\
$S(a_1,a_2,a_3)$~~~~~~~~~~~~~~~~~~~~~~~~~~~~~~~~~~~~~~~~~~~~$S(n-3,0,0)\cong S'_n$
 \caption{\small \sl Two graphs $S(a_1,a_2,a_3)$ and $S(n-3,0,0)\cong S'_n$.}
  \label{fig2}
 \end{center}
 \end{figure}
 
Let $S(a_1,a_2,a_3)$ be a unicyclic graph of order $n$ with cycle length $3$, where $n-3$ pendent vertices are attached to the vertices of the cycle $C_3$. Specifically, $a_1, a_2,$ and $a_3$ denote the numbers of pendent vertices attached to the three vertices of $C_3$, satisfying $a_1+a_2+a_3=n-3$ and $a_1 \geq a_2 \geq a_3 \geq 0$. In particular, the graph $S'_n$ is a special case of $S(a_1,a_2,a_3)$, obtained by setting $(a_1,a_2,a_3)=(n-3,0,0)$. For an illustration, see Fig.~\ref{fig2}. Now,
\begin{align}
HSO(S(a_1,a_2,a_3))&=a_1\,\sqrt{(a_1+2)^2+1}+a_2\,\sqrt{(a_2+2)^2+1}\nonumber\\[3mm]
&~~+a_3\,\sqrt{(a_3+2)^2+1}+\sqrt{\left(\frac{a_1+2}{a_2+2}\right)^2+1}\nonumber\\[3mm]
&~~+\sqrt{\left(\frac{a_2+2}{a_3+2}\right)^2+1}+\sqrt{\left(\frac{a_1+2}{a_3+2}\right)^2+1}.\label{r0}
\end{align}

We now establish lower and upper bounds for $HSO(G)$ in unicyclic graphs in terms of their order $n$, and characterize the corresponding extremal graphs.
\begin{theorem} Let $G$ be a unicyclic graph of order $n$. Then 
\begin{align}
\sqrt{2}\,n\leq HSO(G)\leq (n-3)\,\sqrt{n^2-2n+2}+\sqrt{n^2-2n+5}+\sqrt{2}\label{e2}
\end{align}
with equality on the left if and only if $G\cong C_n$ and equality on the right if and only if $G\cong S'_n$. 
\end{theorem}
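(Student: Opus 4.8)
The plan is to handle the two bounds separately. The lower bound is immediate: a unicyclic graph has exactly $m=|E(G)|=n$ edges, and by Lemma~\ref{k1}(ii) --- or simply $\sqrt{d_i^2+d_j^2}\ge\sqrt2\,\min\{d_i,d_j\}$ --- every summand in $HSO(G)$ is at least $\sqrt2$, with equality on an edge $v_iv_j$ precisely when $d_i=d_j$. Summing over the $n$ edges gives $HSO(G)\ge\sqrt2\,n$; since $G$ is connected, equality throughout forces $d_1=\cdots=d_n$, so $G$ is $r$-regular, and then $nr/2=m=n$ forces $r=2$, i.e.\ $G\cong C_n$, while conversely $HSO(C_n)=\sqrt2\,n$.

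For the upper bound I would argue by cases on $\Delta=\Delta(G)$. If $\Delta=n-1$, the vertex of degree $n-1$ is adjacent to all other vertices, which already accounts for $n-1$ of the $n$ edges, so the single remaining edge joins two of the other vertices; this creates exactly one triangle and forces $G\cong S'_n$, giving equality. If $\Delta\le n-2$, I would bound each edge via Lemma~\ref{k1}: a pendent edge contributes at most $\sqrt{(n-2)^2+1}$ and a non-pendent edge at most $\tfrac12\sqrt{(n-2)^2+4}$. Writing $n_1$ for the number of pendent vertices (equivalently, of pendent edges), there are $n-n_1$ non-pendent edges, and since the unique cycle supplies at least three vertices of degree $\ge2$, we have $n_1\le n-3$. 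Because $\sqrt{(n-2)^2+1}>\tfrac12\sqrt{(n-2)^2+4}$, the estimate $HSO(G)\le n_1\sqrt{(n-2)^2+1}+(n-n_1)\tfrac12\sqrt{(n-2)^2+4}$ is increasing in $n_1$, whence $HSO(G)\le(n-3)\sqrt{(n-2)^2+1}+\tfrac32\sqrt{(n-2)^2+4}$.

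It then remains to verify the single-variable inequality $(n-3)\sqrt{(n-2)^2+1}+\tfrac32\sqrt{(n-2)^2+4}<(n-3)\sqrt{n^2-2n+2}+\sqrt{n^2-2n+5}+\sqrt2=HSO(S'_n)$ for all $n\ge4$ (the case $n=3$ is vacuous here). This holds because, on the right, the factor $n-3$ multiplies $\sqrt{n^2-2n+2}$, which is strictly larger than the $\sqrt{(n-2)^2+1}=\sqrt{n^2-4n+5}$ it multiplies on the left, producing a surplus of order $n$ that comfortably absorbs the deficit $\sqrt{n^2-2n+5}+\sqrt2-\tfrac32\sqrt{n^2-4n+8}$; this can be made rigorous by isolating one radical and squaring. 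Combining the two cases yields $HSO(G)\le HSO(S'_n)$ with equality if and only if $\Delta=n-1$, i.e.\ if and only if $G\cong S'_n$.

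The point that needs care is to avoid the naive route. As the examples in Remark~2 show, $HSO$ is not monotone under edge addition or deletion, so the customary transformation chain for extremal unicyclic problems --- contracting the cycle to a triangle, absorbing pendent paths, and finally concentrating every pendent vertex at one cycle vertex (the last step being exactly what Lemma~\ref{k2} is tailored for) --- would require a delicate sign check at every step. Conditioning instead on whether $\Delta$ attains its largest possible value $n-1$ sidesteps all of this, reducing the upper bound to one elementary degree count plus one explicit inequality in $n$; the only spots genuinely demanding attention are the small-$n$ base cases and the term-by-term comparison in that inequality.
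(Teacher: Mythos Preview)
Your lower-bound argument coincides with the paper's. For the upper bound, your route is correct and genuinely different from---indeed cleaner than---the paper's. The paper partitions according to the number $p$ of pendent vertices: in its Case~1 ($p=n-3$) it must analyse every graph $S(a_1,a_2,a_3)$ with $a_1+a_2+a_3=n-3$, splitting further on $a_2\in\{0,1\}$ versus $a_2\ge2$ and invoking Lemma~\ref{k2} to handle the latter; in its Case~2 ($p\le n-4$) it uses the same Lemma~\ref{k1} edge bounds you use. By splitting instead on whether $\Delta=n-1$, you absorb all of the paper's Case~1 sub-cases with $a_2\ge1$ into your uniform $\Delta\le n-2$ estimate, so Lemma~\ref{k2} is never needed for this theorem at all. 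The price is that everything now rests on the single global inequality
\[
(n-3)\sqrt{(n-2)^2+1}+\tfrac32\sqrt{(n-2)^2+4}\;<\;(n-3)\sqrt{(n-1)^2+1}+\sqrt{(n-1)^2+4}+\sqrt{2}\qquad(n\ge4),
\]
whereas the paper never confronts a comparison quite this coarse.

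One caution on that last step: ``isolating one radical and squaring'' is too optimistic, since four distinct square roots appear simultaneously. A cleaner way to finish is to rewrite the difference as
\[
(n-3)\cdot\frac{2n-3}{\sqrt{(n-1)^2+1}+\sqrt{(n-2)^2+1}}\;-\;\Bigl(\tfrac32\sqrt{(n-2)^2+4}-\sqrt{(n-1)^2+4}-\sqrt{2}\Bigr),
\]
bound the first fraction below using $\sqrt{(n-1)^2+1}+\sqrt{(n-2)^2+1}<2n-1$, bound the bracketed term above via $\sqrt{x^2+c}<x+\tfrac{c}{2x}$, and then check $n=4$ (and, if you like, $n=5,6$) by hand. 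This is routine but does need to be written out; your outline is correct in spirit, and once this inequality is nailed down your argument is complete and shorter than the paper's.
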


\begin{proof} {\bf Upper Bound:} Let $\Delta$ and $p$ be the maximum degree and the number of pendent vertices in $G$. By Lemma \ref{k1}, for any pendent edge $v_iv_j\in E(G)$ with $d_i>d_j=1$, we have
\begin{align}
\frac{\sqrt{d^{2}_i+d^{2}_j}}{\min\{d_i,\,d_j\}}\leq \sqrt{\Delta^2+1}.\label{e3}
\end{align}
By Lemma \ref{k1}, for any non-pendent edge $v_iv_j\in E(G)$ with $d_i\geq d_j$, we have
\begin{align}
\frac{\sqrt{d^{2}_i+d^{2}_j}}{\min\{d_i,\,d_j\}}\leq \frac{1}{2}\,\sqrt{\Delta^2+4}.\label{e4}
\end{align}
Since $G$ is unicyclic, we have $p\leq n-3$. We consider the following two cases:

\vspace*{2mm}

\noindent
${\bf Case\,1.}$ $p=n-3$. In this case $G\cong S(a_1,a_2,a_3)$, where $a_1+a_2+a_3=n-3$ and $a_1\geq a_2\geq a_3\geq 0$. If $a_2=0$, then $a_2=a_3=0$ and $a_1=n-3$, that is, $G\cong S(n-3,0,0)\cong S'_n$ with
  $$HSO(G)=(n-3)\,\sqrt{n^2-2n+2}+\sqrt{n^2-2n+5}+\sqrt{2}$$ 
and hence the equality holds. Otherwise, $a_2\geq 1$. Since $a_1\geq a_2\geq a_3\geq 0$ and $a_1+a_2+a_3=n-3$, we have $a_2\leq \lfloor\frac{n-3}{2}\rfloor$. We consider the following cases:

\vspace*{2mm}

\noindent
${\bf Case\,1.1.}$ $a_2=1$. In this case, we have either $a_3=0$ or $a_3=1$. For $a_3=0\,(a_1=n-4)$, we obtain
\begin{align*}
HSO(S(n-4,1,0))&=(n-4)\,\sqrt{(n-2)^2+1}+\sqrt{10}+\sqrt{3.25}\\[3mm]
&~~~~~~+\sqrt{\left(\frac{n-2}{3}\right)^2+1}+\sqrt{\left(\frac{n-2}{2}\right)^2+1}\\[3mm]
&<(n-4)\,\sqrt{(n-2)^2+1}+4.97+\frac{n-1}{3}+\frac{n-1}{2}\\[3mm]
&<(n-3)\,\sqrt{n^2-2n+2}+\sqrt{n^2-2n+5}+\sqrt{2}
\end{align*}
as $n\geq 5$. The result (\ref{e2}) strictly holds. 

\vspace*{2mm}

For $a_3=1\,(a_1=n-5)$, we have 
\begin{align*}
HSO(S(n-5,1,1))&=(n-5)\,\sqrt{(n-3)^2+1}+2\,\sqrt{10}+\sqrt{\left(\frac{n-3}{3}\right)^2+1}\\[3mm]&~~~~~~+\sqrt{2}+\sqrt{\left(\frac{n-3}{3}\right)^2+1}\\[3mm]
&<(n-5)\,\sqrt{(n-3)^2+1}+7.74+\frac{n-1}{3}+\frac{n-1}{3}\\[3mm]
&<(n-3)\,\sqrt{n^2-2n+2}+\sqrt{n^2-2n+5}+\sqrt{2}
\end{align*}
as $n\geq 6$. The result (\ref{e2}) strictly holds. 

\vspace*{2mm}

\noindent
${\bf Case\,1.2.}$ $a_2\geq 2$. Since $a_1+a_2+a_3=n-3$ and $a_1\geq a_2\geq a_3$, we have $a_3+2\leq a_1+2\leq n-a_2-1$. Moreover, $a_2\leq \lfloor\frac{n-3}{2}\rfloor$ and $a_1\leq n-5$. Thus we have
\begin{align*}
a_1\,\sqrt{(a_1+2)^2+1}+a_3\,\sqrt{(a_3+2)^2+1}&\leq (a_1+a_3)\,\sqrt{(n-a_2-1)^2+1}\\[2mm]
&=(n-a_2-3)\,\sqrt{(n-a_2-1)^2+1}.
\end{align*}
Since $a_2\geq 2$, by Lemma \ref{k2}, we obtain
\begin{align*}
&a_2\,\sqrt{(a_2+2)^2+1}+(n-a_2-3)\,\sqrt{(n-a_2-1)^2+1}\\[3mm]
&~~~~~~~~~~~~~~~~~~~~~~~~~~\leq 2\,\sqrt{17}+(n-5)\,\sqrt{(n-3)^2+1}.
\end{align*}
Using the above results, we obtain
\begin{align*}
&a_1\,\sqrt{(a_1+2)^2+1}+a_2\,\sqrt{(a_2+2)^2+1}+a_3\,\sqrt{(a_3+2)^2+1}\\[2mm]
&~~~~~~~~~\leq a_2\,\sqrt{(a_2+2)^2+1}+(n-a_2-3)\,\sqrt{(n-a_2-1)^2+1}\\[2mm]
&~~~~~~~~~\leq 2\,\sqrt{17}+(n-5)\,\sqrt{(n-3)^2+1}.
\end{align*}
Since $n\geq 7$, we obtain
\begin{align*}
\sqrt{\left(\frac{a_1+2}{a_2+2}\right)^2+1}&\leq \sqrt{\left(\frac{n-3}{4}\right)^2+1}<\frac{n-1}{4},\\[3mm]
\sqrt{\left(\frac{a_2+2}{a_3+2}\right)^2+1}&\leq \sqrt{\left(\frac{n+1}{4}\right)^2+1}<\frac{n-1}{2},\\[3mm]
\sqrt{\left(\frac{a_1+2}{a_3+2}\right)^2+1}&\leq \sqrt{\left(\frac{n-3}{2}\right)^2+1}<\frac{n-1}{2}.    
\end{align*}
Using the above results, we obtain
\begin{align*}
\sqrt{\left(\frac{a_1+2}{a_2+2}\right)^2+1}+\sqrt{\left(\frac{a_2+2}{a_3+2}\right)^2+1}+\sqrt{\left(\frac{a_1+2}{a_3+2}\right)^2+1}<\frac{5\,(n-1)}{4}.
\end{align*}
Using the above results, from (\ref{r0}), we obtain
\begin{align*}
HSO(S(a_1,a_2,a_3))&<\frac{5\,(n-1)}{4}+2\,\sqrt{17}+(n-5)\,\sqrt{(n-3)^2+1}\\[3mm]
   &<(n-3)\,\sqrt{n^2-2n+2}+\sqrt{n^2-2n+5}+\sqrt{2}.
\end{align*}
\vspace*{2mm}
The result (\ref{e2}) strictly holds. 

\vspace*{3mm}

\noindent
${\bf Case\,2.}$ $p\leq n-4$. Since $G$ is unicyclic, in this case, $\Delta\leq n-2$. Thus we have $\sqrt{\Delta^2+1}\leq \sqrt{n^2-4n+5}<\sqrt{n^2-2n+2}$ and $\sqrt{\Delta^2+4}\leq \sqrt{n^2-4n+8}<\sqrt{n^2-2n+2}$,
 as $n\geq 4$. Using the above results with (\ref{e3}) and (\ref{e4}), we obtain
\begin{align*}
HSO(G)&=\sum\limits_{v_iv_j\in E(G)}\,\frac{\sqrt{d^{2}_i+d^{2}_j}}{\min\{d_i,\,d_j\}}\\[3mm]
&=\sum\limits_{v_iv_j\in E(G),\atop d_i\geq d_j=1}\,\frac{\sqrt{d^{2}_i+d^{2}_j}}{d_j}+\sum\limits_{v_iv_j\in E(G),\atop d_i\geq d_j\geq 2}\,\frac{\sqrt{d^{2}_i+d^{2}_j}}{d_j}\\[3mm]
&\leq p\,\sqrt{\Delta^2+1}+\frac{n-p}{2}\,\sqrt{\Delta^2+4}\\[3mm]
&=\frac{n}{2}\,\sqrt{\Delta^2+4}+p\,\left(\sqrt{\Delta^2+1}-\frac{1}{2}\,\sqrt{\Delta^2+4}\right)\\[3mm]
&\leq \frac{n}{2}\,\sqrt{\Delta^2+4}+(n-4)\,\left(\sqrt{\Delta^2+1}-\frac{1}{2}\,\sqrt{\Delta^2+4}\right)\\[3mm]
&=(n-4)\,\sqrt{\Delta^2+1}+2\,\sqrt{\Delta^2+4}\\[3mm]
&<(n-3)\,\sqrt{n^2-2n+2}+\sqrt{n^2-2n+5}+\sqrt{2}.
\end{align*}
The result (\ref{e2}) strictly holds. 

\vspace*{3mm}

\noindent
{\bf Lower Bound:} Let $m$ and $p$ be the number of edges and the number of pendent vertices in $G$. Since $G$ is unicyclic, we have $p\geq 0$ and $m=n$. Using this with Lemma \ref{k1}, we obtain
\begin{align*}
HSO(T)&=\sum\limits_{v_iv_j\in E(T)}\,\frac{\sqrt{d^{2}_i+d^{2}_j}}{\min\{d_i,\,d_j\}}
\\[3mm]&=\sum\limits_{v_iv_j\in E(T),\atop d_i\geq d_j=1}\,\frac{\sqrt{d^{2}_i+d^{2}_j}}{\min\{d_i,\,d_j\}}+\sum\limits_{v_iv_j\in E(T),\atop d_i\geq d_j\geq 2}\,\frac{\sqrt{d^{2}_i+d^{2}_j}}{\min\{d_i,\,d_j\}}\\[3mm]
&\geq p\,\sqrt{5}+(m-p)\,\sqrt{2}=n\,\sqrt{2}+p\,(\sqrt{5}-\sqrt{2})\geq n\,\sqrt{2}.
\end{align*}
Moreover, the equality holds if and only if $p=0$ with $d_i=d_j$ for any edge $v_iv_j\in E(G)$, that is, if and only if $G$ is a regular graph, that is, $G\cong C_n$ as $G$ is unicyclic.
\end{proof}

 \begin{figure}[ht!]
 \begin{center}
 \includegraphics[height=4cm]{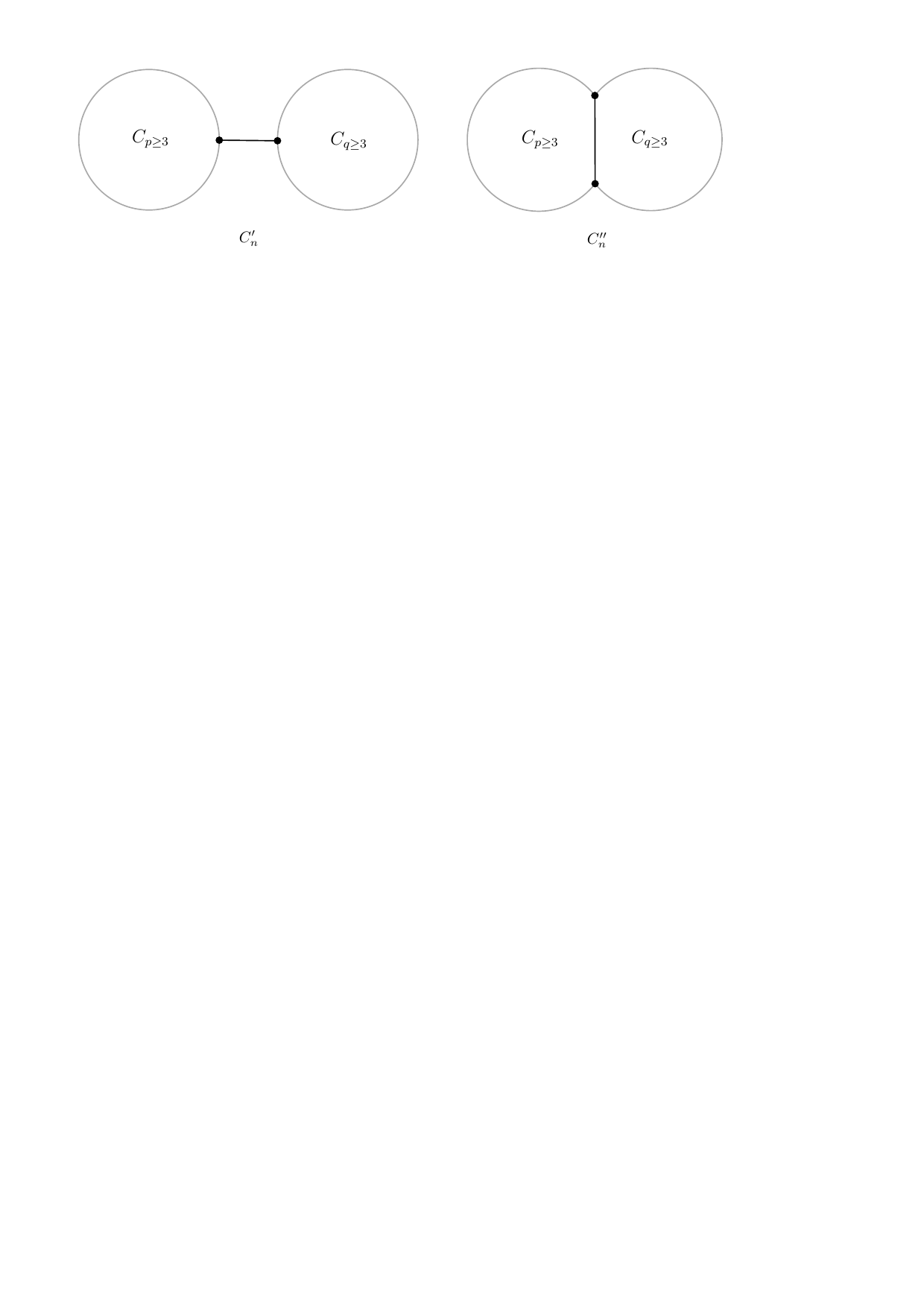}
 \caption{\small \sl Two graphs $C'_n$ and $C''_n$.}
\label{fig3}
\end{center}
\end{figure}

Let $C'_n$ and $C''_n$ denote $n$-vertex bicyclic graphs constructed from cycles $C_p$ ($p\geq 3$) and $C_q$ ($q\geq 3$) as follows: $C'_n$ is obtained by joining $C_p$ and $C_q$ with a single edge (so that $p+q=n$), while $C''_n$ is formed by merging $C_p$ and $C_q$ along a common edge (so that $p+q-2=n$); see Fig. \ref{fig3}. We obtain
 $$HSO(C'_n)=(n-3)\,\sqrt{2}+2\,\sqrt{13}=HSO(C''_n).$$
We now establish a lower bound on $HSO(G)$ for bicyclic graphs in terms of their order $n$, and characterize the corresponding extremal graphs.
\begin{theorem} Let $G$ be a bicyclic graph of order $n$. Then 
\begin{align}
HSO(G)\geq (n-3)\,\sqrt{2}+2\,\sqrt{13}\label{1e2}
\end{align}
with equality if and only if $G\cong C'_n$ or $G\cong C''_n$. 
\end{theorem}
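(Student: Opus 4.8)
The plan is to show that $HSO(G)\geq(n-3)\sqrt2+2\sqrt{13}$ for any bicyclic graph $G$ of order $n$ by splitting the edge sum into pendent and non‑pendent contributions, as in the unicyclic case, but with a more careful bookkeeping of the non‑pendent edges, since in a bicyclic graph the ``core'' (the union of the two cycles, i.e.\ the maximal subgraph with minimum degree $\geq 2$) is richer than a single cycle. First I would recall that a bicyclic graph has $m=n+1$ edges, and that its $2$‑core $B$ is one of the standard shapes: either two cycles joined by a path (possibly of length $0$, i.e.\ sharing one vertex — the ``$\infty$‑graph''), or a ``$\theta$‑graph'' (three internally disjoint paths between two vertices). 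In all cases $B$ has exactly two vertices of degree $3$ (when the connecting path has positive length) or one vertex of degree $4$, and all other core vertices have degree $2$.

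The key estimates are Lemma~\ref{k1}: every non‑pendent edge contributes at least $\sqrt2$, and every pendent edge contributes at least $\sqrt5>\sqrt2$; so writing $p$ for the number of pendent vertices we immediately get $HSO(G)\geq p\sqrt5+(n+1-p)\sqrt2 = (n+1)\sqrt2+p(\sqrt5-\sqrt2)\geq(n+1)\sqrt2$. This already exceeds $(n-3)\sqrt2+2\sqrt{13}$ precisely when $(n+1)\sqrt2\geq(n-3)\sqrt2+2\sqrt{13}$, i.e.\ $4\sqrt2\geq2\sqrt{13}$, which is \emph{false} ($4\sqrt2\approx5.66<7.21\approx2\sqrt{13}$). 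Hence the crude bound is not enough, and the real content is that the core itself already carries more than $\,(\text{number of core edges})\cdot\sqrt2\,$: near each degree‑$3$ (or degree‑$4$) core vertex the incident core edges have a $\min$ of $2$ but a numerator involving $\sqrt{3^2+2^2}=\sqrt{13}$ (or $\sqrt{3^2+3^2}$, etc.), so they contribute strictly more than $\sqrt2$. The strategy is therefore: (i) bound each pendent edge below by $\sqrt2$ (losing the $\sqrt5$ slack, which we do not need); (ii) for the core $B$, show directly that $\sum_{e\in E(B)}\frac{\sqrt{d_i^2+d_j^2}}{\min\{d_i,d_j\}}\geq (|E(B)|-3)\sqrt2+2\sqrt{13}$, where the ``$-3$ edges and $+2\sqrt{13}$'' accounts exactly for the excess coming from the two branch vertices; (iii) combine, using $|E(G)|-|E(B)|=p'$ (the number of edges outside the core, all of which lie on pendent paths and each contribute $\geq\sqrt2$) together with $|E(B)|=|V(B)|$ minus... — more precisely, since $G$ is bicyclic, $|E(B)|=|V(B)|+1$, and adding the $n-|V(B)|$ non‑core vertices adds exactly $n-|V(B)|$ edges, so $|E(G)|=|E(B)|+(n-|V(B)|)=n+1$, consistent, and the total non‑$\sqrt{13}$ part telescopes to $(n-3)\sqrt2$.

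The heart of the argument — and the main obstacle — is step (ii): a clean lower bound on $HSO$ restricted to the $2$‑core with a tight ``$2\sqrt{13}$'' term and equality characterization. I would handle it by cases on the shape of $B$. If $B$ is an $\infty$‑graph or a theta‑graph with the connecting path of length $\geq1$, there are two vertices $u,w$ of degree $3$; the three edges at $u$ each have form $\frac{\sqrt{d^2+3^2}}{\min\{d,3\}}$ with $d\in\{2,3\}$; in the ``best'' (smallest) case all neighbours have degree $2$, giving $3\cdot\frac{\sqrt{13}}{2}$ at $u$ and likewise at $w$, while the remaining core edges (between two degree‑$2$ vertices) contribute $\sqrt2$ each. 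Counting: $|E(B)|=|V(B)|+1$, three edges touch $u$, three touch $w$, at most one edge is shared, so at least $5$ or $6$ edges are ``special''; one must check $6\cdot\frac{\sqrt{13}}{2}+(\text{rest})\cdot\sqrt2\geq(|E(B)|-3)\sqrt2+2\sqrt{13}$, i.e.\ $3\sqrt{13}\geq 3\sqrt2+2\sqrt{13}$, i.e.\ $\sqrt{13}\geq3\sqrt2$ — again \emph{false}, so even here one needs the finer fact that when $u$ and $w$ are \emph{adjacent} (as in $C'_n$, $C''_n$) the shared edge $uw$ has value $\frac{\sqrt{3^2+3^2}}{3}=\sqrt2$, not $\frac{\sqrt{13}}{2}$, and this is exactly the configuration forcing equality. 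So the careful accounting is: edge $uw$ (if present) contributes $\sqrt2$; the four remaining cycle edges at $u,w$ contribute $\frac{\sqrt{13}}{2}$ each, total $2\sqrt{13}$; all other core edges contribute $\sqrt2$; and in the theta‑graph or non‑adjacent case one shows the value is strictly larger. The degree‑$4$ sub‑case ($B$ two cycles sharing a vertex $v$) is separate: the four edges at $v$ each give $\frac{\sqrt{d^2+16}}{\min\{d,4\}}\geq\frac{\sqrt{20}}{4}=\frac{\sqrt5}{2}$, and one checks $4\cdot\frac{\sqrt5}{2}=2\sqrt5>2\sqrt{13}-$ wait, $2\sqrt5\approx4.47<2\sqrt{13}$, so here $4\cdot\frac{\sqrt5}{2}=2\sqrt5$ plus the $\sqrt2$'s on the rest must be compared — actually $2\sqrt5 + (|E(B)|-4)\sqrt2$ versus $2\sqrt{13}+(|E(B)|-3)\sqrt2$ reduces to $2\sqrt5\geq\sqrt2+2\sqrt{13}$, \emph{false}, so this graph would \emph{violate} the bound unless its order forces $|E(B)|$ large — but $|E(B)|$ is bounded by the cycle lengths and the excess over $\sqrt2$ per non‑core pendent edge never helps since pendent edges give only $\sqrt5-\sqrt2$... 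This tension is precisely why the extremal graphs are $C'_n,C''_n$ and \emph{not} the one‑shared‑vertex graph, and pinning down that the bound is nonetheless valid (presumably because the shared‑vertex graph does have $HSO>(n-3)\sqrt2+2\sqrt{13}$ once one computes $2\sqrt5$ correctly against the true edge count) is the delicate point I would verify last, together with the full equality analysis: equality forces $p=0$ (no pendents), the core to have exactly two adjacent degree‑$3$ vertices with all other vertices of degree $2$, hence $G\cong C'_n$ or $G\cong C''_n$.
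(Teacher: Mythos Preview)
Your outline has the right structural idea---lower-bound the ``easy'' edges by $\sqrt2$ and extract the surplus $2\sqrt{13}$ from the branch vertices of the $2$-core---but two arithmetic slips derail the analysis and lead you to believe the inequalities fail when in fact they hold easily. First, the target for the core bound is off by one: since $|E(B)|=|V(B)|+1$ and for $G=B$ you need $HSO(B)\ge(|V(B)|-3)\sqrt2+2\sqrt{13}$, the correct core inequality is
\[
\sum_{e\in E(B)}\frac{\sqrt{d_i^2+d_j^2}}{\min\{d_i,d_j\}}\ \ge\ (|E(B)|-4)\sqrt2+2\sqrt{13},
\]
not $(|E(B)|-3)\sqrt2+2\sqrt{13}$. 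With the correct target, your ``non-adjacent'' case reduces to $\sqrt{13}\ge 2\sqrt2$ (true), not $\sqrt{13}\ge 3\sqrt2$. Second, in the degree-$4$ sub-case you divided by $\max\{d,4\}=4$ instead of $\min\{d,4\}=2$: each of the four edges at the degree-$4$ vertex contributes $\tfrac{\sqrt{4^2+2^2}}{2}=\sqrt5$, not $\tfrac{\sqrt5}{2}$, so the total there is $4\sqrt5\approx 8.94>2\sqrt{13}\approx 7.21$. With these two fixes every ``false'' in your write-up becomes ``true'' and the $p=0$ case goes through, with equality exactly when the two degree-$3$ vertices are adjacent, i.e.\ $G\cong C'_n$ or $C''_n$.

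For $p\ge1$, however, your core-based decomposition has a subtlety you do not address: the edge weight $\sqrt{(d_i/d_j)^2+1}$ is \emph{decreasing} in the smaller degree $d_j$, so attaching pendent trees to a core vertex of $B$-degree $2$ can push the $G$-degree contribution of an adjacent core edge \emph{below} its $B$-degree contribution. Hence ``bound the core by its $B$-value and add $\sqrt2$ per non-core edge'' is not automatically valid. The paper avoids this entirely by organizing the proof around the number $p$ of pendent vertices rather than the $2$-core: for $p\ge 2$ the crude estimate $p\sqrt5+(n+1-p)\sqrt2\ge 2\sqrt5+(n-1)\sqrt2>2\sqrt{13}+(n-3)\sqrt2$ already suffices; for $p=0$ one computes directly on the only two possible degree sequences $(3,3,2,\dots,2)$ and $(4,2,\dots,2)$; and $p=1$ is handled by a short ad hoc argument using that the single pendent edge contributes at least $\sqrt5$ (or $\sqrt{10}$) and that some vertex of degree $\ge 3$ necessarily has at least two neighbours of degree $2$, which forces at least $\sqrt{13}$ of extra weight. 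This case split is less conceptual than your core picture but cleaner to execute and avoids the monotonicity trap.
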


\begin{proof} Let $p\,(\geq 0)$ be the number of pendent vertices in $G$. We consider the following three cases:

\vspace*{2mm}

\noindent
${\bf Case\,1.}$ $p=0$. Since $G$ is bicyclic, the degree sequence of $G$ is $(3,3,\underbrace{2,\ldots,2}_{n-2})$ or $(4,\underbrace{2,\ldots,2}_{n-1})$. First we assume that the degree sequence of $G$ is $(3,3,\underbrace{2,\ldots,2}_{n-2})$. If two vertices of degree $3$ are adjacent, then $G\cong C'_n$ or $G\cong C''_n$. For $G\cong C'_n$, we have
\begin{align*}
HSO(G)&=4\,\sqrt{\frac{13}{4}}+(n-3)\,\sqrt{2}=(n-3)\,\sqrt{2}+2\,\sqrt{13}
\end{align*}
and hence the equality holds in (\ref{1e2}). For $G\cong C''_n$, we have
\begin{align*}
HSO(G)&=4\,\sqrt{\frac{13}{4}}+(n-3)\,\sqrt{2}=(n-3)\,\sqrt{2}+2\,\sqrt{13}
\end{align*}
and hence the equality holds in (\ref{1e2}). Otherwise, two vertices of degree $3$ are not adjacent. Thus we obtain
\begin{align*}
HSO(G)=6\,\sqrt{\frac{13}{4}}+(n-5)\,\sqrt{2}&=(n-5)\,\sqrt{2}+3\,\sqrt{13}\\[2mm]
&>(n-3)\,\sqrt{2}+2\,\sqrt{13}.
\end{align*}
The lower bound in (\ref{1e2}) strictly holds.

\vspace*{3mm}

Next we assume that the degree sequence of $G$ is $(4,\underbrace{2,\ldots,2}_{n-1})$. Thus we obtain
\begin{align*}
HSO(G)&=4\,\sqrt{5}+(n-3)\,\sqrt{2}>(n-3)\,\sqrt{2}+2\,\sqrt{13}.
\end{align*}
The lower bound in (\ref{1e2}) strictly holds.

\vspace*{3mm}

\noindent
${\bf Case\,2.}$ $p=1$. Let $v_r$ and $v_k$ be the maximum degree vertex and the pendent vertex in $G$. Then $d_r=\Delta$ and $d_k=1$. Then we have
  $$2\,(n+1)=\sum\limits^n_{i=1}\,d_i\geq \Delta+2\,(n-2)+1,~\mbox{ that is, }~3\leq \Delta\leq 5.$$ 
Let $v_i\,(\neq v_k)$ be any vertex in $G$. Then $d_i\in \{2,\,3,\,4,\,5\}$.
First we assume that the pendent vertex $v_k$ is adjacent to the vertex $v_{\ell}$ of degree $3$ or more. Then we have
 $$\frac{\sqrt{d^{2}_{\ell}+d^{2}_k}}{d_k}\geq \sqrt{10}.$$
Using the above result with Lemma \ref{k1}, we obtain
\begin{align*}
HSO(G)&=\sum\limits_{v_iv_j\in E(G)}\,\frac{\sqrt{d^{2}_i+d^{2}_j}}{\min\{d_i,\,d_j\}}=\frac{\sqrt{d^{2}_{\ell}+d^{2}_k}}{d_k}+\sum\limits_{v_iv_j\in E(G),\atop d_i\geq d_j\geq 2}\,\frac{\sqrt{d^{2}_i+d^{2}_j}}{d_j}\\[3mm]
&\geq \sqrt{10}+n\,\sqrt{2}>(n-3)\,\sqrt{2}+2\,\sqrt{13}.
\end{align*}
The lower bound in (\ref{1e2}) strictly holds.

\vspace*{2mm}

Next we assume that only the pendent vertex $v_k$ is adjacent to the vertex $v_{\ell}$ of degree $2$. Then we have
\begin{align}
\frac{\sqrt{d^{2}_{\ell}+d^{2}_k}}{d_k}=\sqrt{5}.\label{t1}
\end{align}
Let $E_1=\{v_rv_j\in E(G)|\,v_j\in N_G(v_r)\}$. For $\Delta=5$, the degree sequence of $G$ is $(5,\underbrace{2,\ldots,2}_{n-2},\,1)$. For $\Delta=4$, the degree sequence of $G$ is $(4,3,\underbrace{2,\ldots,2}_{n-3},\,1)$. For $4\leq \Delta\leq 5$, one can easily see that the maximum degree vertex $v_r$ is adjacent to at least two vertices of degree $2$.

\vspace*{2mm}

For $\Delta=3$, the degree sequence of $G$ is $(3,3,3,\underbrace{2,\ldots,2}_{n-4},\,1)$. Since $G$ is bicyclic and $n\geq 10$, one can easily see that there exists a vertex $v_r$ of degree $3$ is adjacent to at least two vertices of degree $2$. For $4\leq \Delta\leq 5$ or $\Delta=3$, using the above result with Lemma \ref{k1}, we obtain
  $$\sum\limits_{v_j:v_rv_j\in E(G)}\,\frac{\sqrt{d^{2}_r+d^{2}_j}}{d_j}\geq \sqrt{13}+(d_r-2)\,\sqrt{2}.$$
Using the above result with (\ref{t1}), we obtain
\begin{align*}
HSO(G)&=\frac{\sqrt{d^{2}_{\ell}+d^{2}_k}}{d_k}+\sum\limits_{v_j:v_rv_j\in E(G)}\,\frac{\sqrt{d^{2}_r+d^{2}_j}}{d_j}+\sum\limits_{v_iv_j\in E(G)\backslash E_1,\atop d_i\geq d_j\geq 2}\,\frac{\sqrt{d^{2}_i+d^{2}_j}}{d_j}\\[3mm]
&\geq \sqrt{5}+\sqrt{13}+(d_r-2)\,\sqrt{2}+(n-d_r)\,\sqrt{2}>(n-3)\,\sqrt{2}+2\,\sqrt{13}.
\end{align*}
The lower bound in (\ref{1e2}) strictly holds.

\vspace*{2mm}

\noindent
${\bf Case\,3.}$ $p\geq 2$. By Lemma \ref{k1}, we obtain
\begin{align*}
HSO(G)&=\sum\limits_{v_iv_j\in E(G),\atop d_i\geq d_j=1}\,\frac{\sqrt{d^{2}_i+d^{2}_j}}{d_j}+\sum\limits_{v_iv_j\in E(G),\atop d_i\geq d_j\geq 2}\,\frac{\sqrt{d^{2}_i+d^{2}_j}}{d_j}\\[3mm]
&\geq p\,\sqrt{5}+(n+1-p)\,\sqrt{2}\\[2mm]
&=(n+1)\,\sqrt{2}+p\,(\sqrt{5}-\sqrt{2})\\[2mm]
&\geq (n+1)\,\sqrt{2}+2\,(\sqrt{5}-\sqrt{2})\\[2mm]
&=2\,\sqrt{5}+(n-1)\,\sqrt{2}>(n-3)\,\sqrt{2}+2\,\sqrt{13}.
\end{align*}
The lower bound in (\ref{1e2}) strictly holds.
\end{proof}
Let $C_{3,3}$ denote the graph obtained by merging one vertex from each of two 3-cycles (triangles), and then attaching $n - 5$ pendent edges to the common (merged) vertex; see Fig. \ref{fig4}. In total, the graph has $n$ vertices. We have
    $$HSO(C_{3,3})=(n-5)\,\sqrt{n^2-2n+2}+2\,\sqrt{n^2-2n+5}+2\,\sqrt{2}.$$
We define $S''_n$ as a connected bicyclic graph of order $n$, constructed from $K_4 - e$ (where $e$ is any edge of $K_4$) by attaching $n-4$ pendent edges to one of its vertices of degree $3$; see Fig. \ref{fig4}. We have
    $$HSO(S''_n)=(n-4)\,\sqrt{n^2-2n+2}+\sqrt{n^2-2n+5}+\frac{1}{3}\,\sqrt{n^2-2n+10}+\sqrt{13}.$$

 \begin{figure}[ht!]
 \begin{center}
 \includegraphics[height=4.5cm]{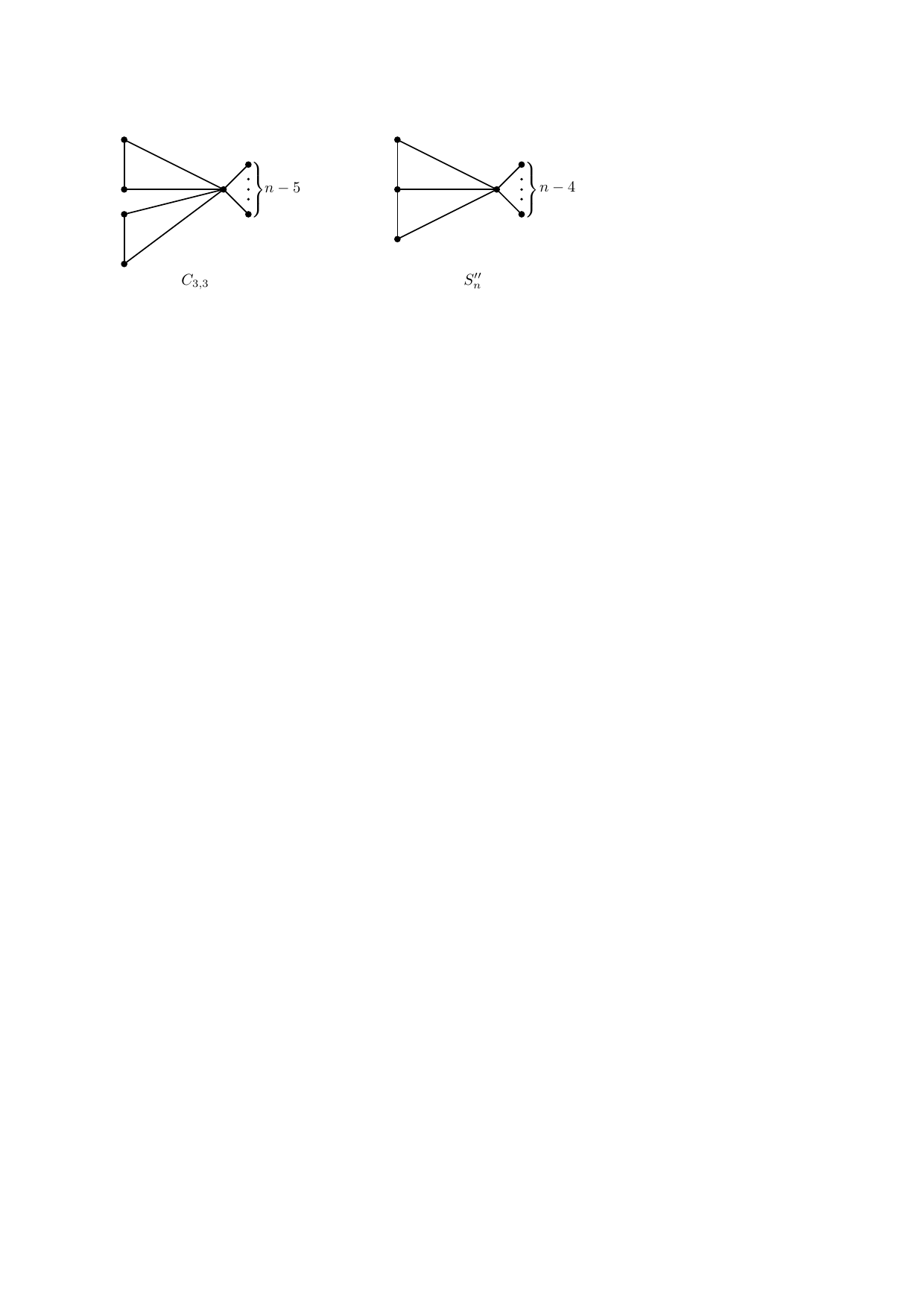}
 \caption{\small \sl Two graphs $C_{3,3}$ and $S''_n$.}
\label{fig4}
\end{center}
\end{figure}
    
We now establish an upper bound of $HSO(G)$ for bicyclic graphs in terms of their order $n$, and characterize the corresponding extremal graphs.
\begin{theorem} Let $G$ be a bicyclic graph of order $n$. Then 
{\small
\begin{align}
HSO(G)&\leq (n-4)\,\sqrt{n^2-2n+2}+\sqrt{n^2-2n+5}+\frac{1}{3}\,\sqrt{n^2-2n+10}+\sqrt{13}\label{pe2}
\end{align}}
with equality if and only if $G\cong S''_n$. 
\end{theorem}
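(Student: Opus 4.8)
The plan is to mirror the structure of the lower-bound theorem for bicyclic graphs: split on the number $p$ of pendent vertices, show that the maximum is attained in the regime with the largest possible maximum degree, and then pin down the extremal configuration. Since $S''_n$ has maximum degree $n-1$ (the apex vertex of the $n-4$ pendent edges, which also lies on the $K_4-e$), the governing inequalities will be the pendent-edge bound $\frac{\sqrt{d_i^2+d_j^2}}{\min\{d_i,d_j\}}\le\sqrt{(n-1)^2+1}$ and the non-pendent-edge bound $\le\frac12\sqrt{(n-1)^2+4}$ from Corollary~\ref{tk1}, together with the observation that these are increasing in $\Delta$. First I would handle the case $\Delta\le n-2$: a bicyclic graph has $m=n+1$ edges, at most $p\le n-2$ pendent edges and at least $n+1-p\ge 3$ non-pendent edges, so $HSO(G)\le p\sqrt{\Delta^2+1}+\frac{n+1-p}{2}\sqrt{\Delta^2+4}$; since the coefficient of $p$ is positive ($\sqrt{\Delta^2+1}>\frac12\sqrt{\Delta^2+4}$) this is maximized at $p=n-2$, giving $(n-2)\sqrt{\Delta^2+1}+\frac32\sqrt{\Delta^2+4}$ with $\Delta\le n-2$, and a routine estimate shows this is strictly below the right-hand side of (\ref{pe2}) for all $n\ge 5$ (the RHS already has a term $(n-4)\sqrt{n^2-2n+2}$, and $\sqrt{n^2-2n+2}=\sqrt{(n-1)^2+1}$ dominates $\sqrt{(n-2)^2+1}$ by a full ``degree'').

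Then I would treat $\Delta=n-1$. Let $v_r$ be the vertex of degree $n-1$, so $v_r$ is adjacent to every other vertex; the remaining $n+1-(n-1)=2$ edges lie among the other $n-1$ vertices, which therefore have degrees in $\{1,2,3\}$. Since $G$ is bicyclic, these two extra edges form one of a small number of configurations on $N_G(v_r)$: (a) two disjoint edges, (b) a path of length two, or (c) a double edge is impossible (simple graph), so really (a) or (b); in case (b) the middle vertex has degree $3$ and the two ends have degree $2$, while in case (a) all four endpoints have degree $2$. Writing $HSO(G)$ as the sum over the $n-1$ edges at $v_r$ plus the $2$ extra edges, the edges at $v_r$ contribute $\sum_{v_j\sim v_r}\sqrt{(n-1)^2+d_j^2}/d_j$ (using $d_r=n-1\ge d_j$), which is a decreasing function of each $d_j$; hence to maximize we want the $d_j$ as small as possible, i.e.\ as many pendent vertices as possible among $N_G(v_r)$. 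Configuration (a) forces four vertices of degree $2$ and $n-5$ pendent, giving exactly $HSO(C_{3,3})$; configuration (b) forces one vertex of degree $3$, two of degree $2$, and $n-4$ pendent, giving exactly $HSO(S''_n)$. A direct comparison of the two closed forms — $HSO(S''_n)$ has $n-4$ copies of $\sqrt{n^2-2n+2}$ versus $n-5$ for $C_{3,3}$, and the ``cost'' of the extra structure is $\frac13\sqrt{n^2-2n+10}+\sqrt{13}$ versus $\sqrt{n^2-2n+5}+2\sqrt{2}$ — shows $HSO(S''_n)>HSO(C_{3,3})$ for $n$ large enough, and the small cases can be checked directly. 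This identifies $S''_n$ as the unique maximizer and gives equality in (\ref{pe2}).

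The main obstacle I anticipate is the $\Delta=n-1$ case: one has to argue carefully that \emph{no} other bicyclic graph with a dominating vertex can do better than $S''_n$, which means enumerating the possible ways the two ``extra'' edges can sit among $N_G(v_r)$ and checking that each alternative (e.g.\ the triangle-through-$v_r$ plus one more edge, or two edges sharing a vertex that is already a neighbor forced to higher degree) yields a strictly smaller value. The clean way to do this is the monotonicity remark above: the contribution of the star edges at $v_r$ is $\sum_j g(d_j)$ with $g(d)=\sqrt{(n-1)^2+d^2}/d$ strictly decreasing on $[1,\infty)$, the extra edges contribute a bounded amount (each at most $\frac12\sqrt{(n-1)^2+4}$ if pendent-free, or $\sqrt{(n-1)^2+1}$ — but extra edges are never pendent here), and a convexity/sorting argument shows that pushing degree mass onto as few vertices as possible while keeping the rest pendent is optimal; then only configurations (a) and (b) survive, and (b) wins. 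A secondary nuisance is that the stated bound is claimed for all bicyclic $n$-vertex graphs, so one must also confirm $n\ge 5$ suffices for every strict inequality invoked (bicyclic graphs need $n\ge 4$, but $n=4$ gives $K_4$ which should be checked separately, and indeed $K_4$ is $3$-regular so $HSO(K_4)=6\sqrt2$, comfortably below the RHS at $n=4$).
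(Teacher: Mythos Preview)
Your treatment of the case $\Delta=n-1$ is correct and is exactly what the paper does: with a dominating vertex the two extra edges form either a matching (giving $C_{3,3}$) or a path (giving $S''_n$), and a direct comparison shows $HSO(S''_n)>HSO(C_{3,3})$.

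The gap is in your handling of $\Delta\le n-2$. The ``routine estimate'' you invoke is false. Your bound
\[
HSO(G)\le p\sqrt{\Delta^2+1}+\tfrac{n+1-p}{2}\sqrt{\Delta^2+4},
\]
maximized at $p=n-2$ and $\Delta=n-2$, gives
\[
(n-2)\sqrt{(n-2)^2+1}+\tfrac{3}{2}\sqrt{(n-2)^2+4}\;\approx\;n^2-\tfrac{5}{2}n,
\]
whereas the right-hand side of (\ref{pe2}) is $\approx n^2-\tfrac{11}{3}n$; the difference is $\approx\tfrac{7}{6}n>0$, so your bound \emph{exceeds} the target for all moderately large $n$ (already at $n=10$ one gets $76.9>70.3$). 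Tightening to the correct constraint $p\le n-4$ does not save the argument either: $(n-4)\sqrt{(n-2)^2+1}+\tfrac{5}{2}\sqrt{(n-2)^2+4}$ still overshoots the right-hand side once $n\gtrsim 20$. The heuristic ``$\sqrt{(n-1)^2+1}$ dominates $\sqrt{(n-2)^2+1}$ by a full degree'' ignores that the coefficient drops from $n-2$ to $n-4$, which more than cancels the gain.

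What is missing is exactly the structural input the paper supplies. For $\Delta=n-2$ the second largest degree satisfies $d_2\le 4$, so the three edges not incident to the $\Delta$-vertex each contribute at most $\sqrt{17}$ (not $\sqrt{\Delta^2+1}$), and at least two neighbours of the $\Delta$-vertex have degree $\ge 2$, so their edges contribute at most $\tfrac12\sqrt{\Delta^2+4}$ rather than $\sqrt{\Delta^2+1}$. This yields the much sharper bound $(\Delta-2)\sqrt{\Delta^2+1}+\sqrt{\Delta^2+4}+3\sqrt{17}$, which does fall below the right-hand side. The paper runs this refined analysis separately for $\Delta=n-2,\,n-3,\,n-4$ (bounding $d_2$ by $4,5,6$ respectively), and only for $\Delta\le n-5$ does the crude bound $(n+1)\sqrt{\Delta^2+1}$ suffice. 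Your single uniform estimate cannot replace this case split.
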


\begin{proof} For $4\leq n\leq 9$, by Sage \cite{SA}, one can easily see that the result holds. Moreover, the equality holds if and only if $G\cong S''_n$. Otherwise, $n\geq 10$. Let $v_1$ be the maximum degree vertex of degree $\Delta$. For any $v_i\in V(G)$, one can easily see that
\begin{align}
\sqrt{d^2_i+1}<d_i+\frac{1}{2\,d_i}~\mbox{ and }~\sqrt{d^2_i+4}<d_i+\frac{2}{d_i}.\label{1pp1}
\end{align}
Let $v_2$ be the second maximum degree vertex of degree $d_2$ in $G$. Let $HSO(e_i)$ be the contribution of an edge $e_i$ to $HSO(G)$. From the definition of the Hyperbolic Sombor index, we have
\begin{align}
HSO(G)=\sum\limits_{e_i\in E(G)}\,HSO(e_i).\label{BG1}
\end{align}
We consider the following five cases:

\vspace*{3mm}

\noindent
${\bf Case\,1.}$ $\Delta=n-1$. Since $G$ is bicyclic, we have $G\cong S''_n$ or $G\cong C'_{3,3}$. For $G\cong S''_n$, we have
$$HSO(G)=(n-4)\,\sqrt{n^2-2n+2}+\sqrt{n^2-2n+5}+\frac{1}{3}\,\sqrt{n^2-2n+10}+\sqrt{13}$$
and hence the equlity holds in (\ref{pe2}). For $G\cong C'_{3,3}$, we have
{\small
\begin{align*}
HSO(G)&=(n-5)\,\sqrt{n^2-2n+2}+2\,\sqrt{n^2-2n+5}+2\,\sqrt{2}\\[3mm]
   &<(n-4)\,\sqrt{n^2-2n+2}+\sqrt{n^2-2n+5}+\frac{1}{3}\,\sqrt{n^2-2n+10}+\sqrt{13}.
\end{align*}}
The upper bound in (\ref{pe2}) strictly holds.

\vspace*{3mm}

\noindent
${\bf Case\,2.}$ $\Delta=n-2$. In this case we have $d_2\leq 4$. 
Let $e_1,\,e_2,\,e_3$ be the edges in $G$ such that $E(G)=\{v_1v_i,\,v_i\in N_G(v_1)\}\cup \{e_1,\,e_2,\,e_3\}$. For $1\leq i\leq 3$ with $e_i=v_jv_k$, we obtain
  $$HSO(e_i)=\frac{\sqrt{d^{2}_j+d^{2}_k}}{\min\{d_j,\,d_k\}}\leq \sqrt{4^2+1}=\sqrt{17}.$$
Thus we have
$$\sum\limits^3_{i=1}\,HSO(e_i)\leq 3\,\sqrt{17}.$$
Since $\Delta=|N_G(v_1)|=n-2$, there are at least two vertices $v_s$ and $v_t$ are of degree $2$ or more, where $v_s,\,v_t\in N_G(v_1)$. Let $v_1v_s=e_4$ and $v_1v_t=e_5$. Thus we have
$$HSO(e_4)\leq \sqrt{\Big(\frac{\Delta}{2}\Big)^2+1}~\mbox{ and }~HSO(e_5)\leq \sqrt{\Big(\frac{\Delta}{2}\Big)^2+1}.$$
Since $n\geq 10$, by Lemma \ref{k1} with (\ref{1pp1}), from (\ref{BG1}), we obtain

\begin{align*}
HSO(G)&=\sum\limits_{e_i\in E(G)}\,HSO(e_i)\\[3mm]
&\leq (\Delta-2)\,\sqrt{\Delta^2+1}+2\,\sqrt{\Big(\frac{\Delta}{2}\Big)^2+1}+3\,\sqrt{17}\\[1mm]
&<(n-4)\,\sqrt{(n-2)^2+1}+\sqrt{(n-2)^2+4}+12.37\\[3mm]
&<(n-4)\,\Big(n-2+\frac{1}{2\,(n-2)}\Big)+\Big(n-2+\frac{2}{n-2}\Big)+12.37\\[3mm]
&=n^2-5n+18.87+\frac{1}{n-2}\\[3mm]
&<(n-4)\,(n-1)+\sqrt{n^2-2n+5}+\frac{1}{3}\,\sqrt{n^2-2n+10}+\sqrt{13}\\[3mm]
&<(n-4)\,\sqrt{n^2-2n+2}+\sqrt{n^2-2n+5}+\frac{1}{3}\,\sqrt{n^2-2n+10}\\[1mm]&~~~~~~+\sqrt{13}.
\end{align*}
The upper bound in (\ref{pe2}) strictly holds.

\vspace*{3mm}

\noindent
${\bf Case\,3.}$ $\Delta=n-3$. In this case we have $d_2\leq 5$. 
Let $e'_1,\,e'_2,\,e'_3,\,e'_4$ be the edges in $G$ such that $E(G)=\{v_1v_i,\,v_i\in N_G(v_1)\}\cup \{e'_1,\,e'_2,\,e'_3,\,e'_4\}$. For $1\leq i\leq 4$ with $e'_i=v_jv_k$, we obtain
  $$HSO(e'_i)=\frac{\sqrt{d^{2}_j+d^{2}_k}}{\min\{d_j,\,d_k\}}\leq \sqrt{5^2+1}=\sqrt{26}.$$
Thus we have
$$\sum\limits^4_{i=1}\,HSO(e'_i)\leq 4\,\sqrt{26}.$$
By (\ref{1pp1}), we obtain
\begin{align*}
\Delta\,\sqrt{\Delta^2+1}=(n-3)\,\sqrt{(n-3)^2+1}&<(n-3)\,\Big(n-3+\frac{1}{2\,(n-3)}\Big)\\
&=n^2-6n+9.5.
\end{align*}
Since $n\geq 10$, by Lemma \ref{k1} with the above results, from (\ref{BG1}), we obtain
{\small
\begin{align*}
HSO(G)&=\sum\limits_{e_i\in E(G)}\,HSO(e_i)\\[1mm]
&\leq \Delta\,\sqrt{\Delta^2+1}+4\,\sqrt{26}\\[1mm]
&<n^2-6n+29.9\\[1mm]
&<(n-4)\,\sqrt{n^2-2n+2}+\sqrt{n^2-2n+5}+\frac{1}{3}\,\sqrt{n^2-2n+10}+\sqrt{13}.
\end{align*}
}
The upper bound in (\ref{pe2}) strictly holds.

\vspace*{3mm}

\noindent
${\bf Case\,4.}$ $\Delta=n-4$. In this case we have $d_2\leq 6$. 
Let $e''_1,\,e''_2,\,e''_3,\,e''_4,\,e''_5$ be the edges in $G$ such that $E(G)=\{v_1v_i,\,v_i\in N_G(v_1)\}\cup \{e''_1,\,e''_2,\,e''_3,\,e''_4,\,e''_5\}$. For $1\leq i\leq 5$ with $e''_i=v_jv_k$, we obtain
  $$HSO(e''_i)=\frac{\sqrt{d^{2}_j+d^{2}_k}}{\min\{d_j,\,d_k\}}\leq \sqrt{6^2+1}=\sqrt{37}.$$
Thus we have
$$\sum\limits^5_{i=1}\,HSO(e''_i)\leq 5\,\sqrt{37}.$$
Since $\Delta=n-4$, by (\ref{1pp1}), we have
\begin{align*}
\Delta\,\sqrt{\Delta^2+1}=(n-4)\,\sqrt{(n-4)^2+1}&<(n-4)\,\Big(n-4+\frac{1}{2\,(n-4)}\Big)\\
  &=n^2-8n+16.5.
\end{align*}
Since $n\geq 10$, by Lemma \ref{k1} with the above results, from (\ref{BG1}), we obtain
\begin{align*}
HSO(G)&=\sum\limits_{e_i\in E(G)}\,HSO(e_i)\\[2mm]
&\leq \Delta\,\sqrt{\Delta^2+1}+5\,\sqrt{37}\\[2mm]
&<n^2-8n+46.92\\[2mm]
&<(n-3)\,(n-1)+\frac{1}{3}\,\sqrt{n^2-2n+10}+\sqrt{13}\\[3mm]
&<(n-4)\,\sqrt{n^2-2n+2}+\sqrt{n^2-2n+5}+\frac{1}{3}\,\sqrt{n^2-2n+10}\\[1mm]&~~~~~~~~~~~~~~~~~~~~~~~~~~~~+\sqrt{13}.
\end{align*}
The upper bound in (\ref{pe2}) strictly holds.

\vspace*{3mm}

\noindent
${\bf Case\,5.}$ $\Delta\leq n-5$. Similarly, as in {\bf Case 4}, we have
\begin{align*}
(n+1)\,\sqrt{\Delta^2+1}\leq (n+1)\,\sqrt{(n-5)^2+1}&<(n+1)\,\Big(n-5+\frac{1}{2\,(n-5)}\Big)\\
  &=n^2-4n-5+\frac{n+1}{2\,(n-5)}\\
  &<(n-1)\,(n-3),
\end{align*}
and hence
\begin{align*}
HSO(G)&=\sum\limits_{e_i\in E(G)}\,HSO(e_i)\\[2mm]
&\leq (n+1)\,\sqrt{\Delta^2+1}\\[2mm]
&<(n-3)\,(n-1)\\
&<(n-4)\,\sqrt{n^2-2n+2}+\sqrt{n^2-2n+5}+\frac{1}{3}\,\sqrt{n^2-2n+10}\\[1mm]&~~~~~~+\sqrt{13}.
\end{align*}
The upper bound in (\ref{pe2}) strictly holds. This completes the proof of the theorem.
\end{proof}

\section{Upper and Lower bounds on the Hyperbolic Sombor index of graphs}\label{section4}
In this section we derive some lower and upper bounds for $HSO(G)$ in terms of the number of edges, maximum degree and minimum degree, and we charcterize the graphs that attains these bounds. 
\begin{theorem} Let $G$ be a graph of order $n$ with $m$ edges and maximum degree $\Delta$, minimum degree $\delta$. Then
  $$\Big(1+\frac{\delta}{\sqrt{\Delta^2+\delta^2}+\Delta}\Big)\,m\leq HSO(G)\leq \left(\frac{\Delta}{\delta}+\sqrt{2}-1\right)\,m$$
with both equalities hold if and only if $G$ is a regular graph.
\end{theorem}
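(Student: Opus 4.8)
The plan is to reduce the whole statement to a single-edge estimate. Fix an edge $v_iv_j\in E(G)$ with $d_i\ge d_j$ and write its contribution to $HSO(G)$ as $\frac{\sqrt{d_i^2+d_j^2}}{\min\{d_i,d_j\}}=\sqrt{(d_i/d_j)^2+1}$. Putting $t=d_i/d_j$, we have $1\le t\le \Delta/\delta$, since $\delta\le d_j\le d_i\le\Delta$. Both inequalities in the theorem will follow from the single elementary fact
$$\sqrt{t^2+1}\le t+\sqrt2-1\qquad\text{for all }t\ge1,$$
together with the trivial bound $\sqrt{t^2+1}\ge\sqrt2$ for $t\ge1$. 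The displayed inequality is proved by squaring (both sides are positive): the difference of squares simplifies to $2(\sqrt2-1)(t-1)\ge0$, so it holds with equality precisely when $t=1$.

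For the \textbf{upper bound}, each edge gives $\sqrt{t_e^2+1}\le t_e+\sqrt2-1\le \frac{\Delta}{\delta}+\sqrt2-1$ (using $t_e\le\Delta/\delta$), and summing over the $m$ edges yields $HSO(G)\le(\frac{\Delta}{\delta}+\sqrt2-1)m$. For the \textbf{lower bound}, first rewrite the constant by rationalizing: dividing numerator and denominator by $\delta$ and multiplying by the conjugate gives
$$\frac{\delta}{\sqrt{\Delta^2+\delta^2}+\Delta}=\frac{1}{\sqrt{(\Delta/\delta)^2+1}+\Delta/\delta}=\sqrt{(\Delta/\delta)^2+1}-\frac{\Delta}{\delta}.$$
Applying the displayed inequality with $t=\Delta/\delta$ now gives $1+\frac{\delta}{\sqrt{\Delta^2+\delta^2}+\Delta}=1+\sqrt{(\Delta/\delta)^2+1}-\frac{\Delta}{\delta}\le\sqrt2$. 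Hence every edge satisfies $\sqrt{t_e^2+1}\ge\sqrt2\ge 1+\frac{\delta}{\sqrt{\Delta^2+\delta^2}+\Delta}$, and summing over the $m$ edges gives the left inequality.

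It remains to settle the equality cases. In the upper estimate, equality in the sum forces $\sqrt{t_e^2+1}=t_e+\sqrt2-1$ for every edge, hence $t_e=1$, i.e. $d_i=d_j$ on every edge; since $G$ is connected this forces $d_1=\dots=d_n$, so $G$ is regular. Conversely, if $G$ is $r$-regular then every edge contributes $\sqrt2$, so $HSO(G)=\sqrt2\,m$, while $\frac{\Delta}{\delta}+\sqrt2-1=\sqrt2$, giving equality. For the lower estimate, equality forces $\sqrt{t_e^2+1}=\sqrt2$ on every edge, again $t_e=1$, so $G$ is regular; and for a regular $G$ one has $\Delta=\delta$, so $1+\frac{\delta}{\sqrt{\Delta^2+\delta^2}+\Delta}=1+\frac{1}{\sqrt2+1}=\sqrt2$, matching $HSO(G)/m=\sqrt2$. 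There is no real obstacle here; the only thing to notice is that the peculiar-looking constant $\frac{\delta}{\sqrt{\Delta^2+\delta^2}+\Delta}$ equals $\sqrt{(\Delta/\delta)^2+1}-\Delta/\delta$, after which both bounds collapse onto the same convexity-type inequality $\sqrt{t^2+1}\le t+\sqrt2-1$.
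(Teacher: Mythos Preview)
Your proof is correct. The upper-bound argument is essentially identical to the paper's: both write the edge contribution as $\sqrt{t^2+1}$ with $t=d_i/d_j\in[1,\Delta/\delta]$ and use $\sqrt{t^2+1}-t\le\sqrt2-1$; the paper obtains this via the conjugate identity $\sqrt{t^2+1}-t=1/(\sqrt{t^2+1}+t)\le 1/(\sqrt2+1)$, while you square directly. For the lower bound the routes diverge slightly. The paper bounds each edge by the conjugate trick $\sqrt{t^2+1}-t=1/(\sqrt{t^2+1}+t)\ge \delta/(\sqrt{\Delta^2+\delta^2}+\Delta)$ and then uses $t\ge1$. You instead recognise that the stated constant equals $1+\sqrt{(\Delta/\delta)^2+1}-\Delta/\delta$, apply your inequality $\sqrt{t^2+1}\le t+\sqrt2-1$ at $t=\Delta/\delta$ to conclude this constant is at most $\sqrt2$, and then invoke the trivial per-edge bound $\sqrt{t_e^2+1}\ge\sqrt2$. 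Your route is marginally simpler and, incidentally, makes explicit that the theorem's lower bound never improves on the elementary bound $HSO(G)\ge\sqrt2\,m$.
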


\begin{proof} Let $v_iv_j$ be an edge in $G$ with $d_i\geq d_j$. Since the maximum degree $\Delta$ and the minimum degree $\delta$, we have $1\leq \frac{d_i}{d_j}\leq \frac{\Delta}{\delta}$. One can easily see that
\begin{align}
\sqrt{2}+1\leq \sqrt{\frac{d^2_i}{d^2_j}+1}+\frac{d_i}{d_j}\leq \sqrt{\frac{\Delta^2}{\delta^2}+1}+\frac{\Delta}{\delta}=\frac{\sqrt{\Delta^2+\delta^2}+\Delta}{\delta}.\label{1w1}
\end{align}
Moreover, the above left equality holds if and only if $d_i=d_j$, and the right equality holds if and only if $d_i=\Delta$, $d_j=\delta$.

\vspace*{3mm}

\noindent
{\bf Lower Bound:} Using (\ref{1w1}), we obtain
\begin{align*}
\sqrt{\frac{d^2_i}{d^2_j}+1}-\frac{d_i}{d_j}&=\frac{1}{\sqrt{\frac{d^2_i}{d^2_j}+1}+\frac{d_i}{d_j}}\geq \frac{\delta}{\sqrt{\Delta^2+\delta^2}+\Delta},
\end{align*}
which implies that
   $$\sqrt{\frac{d^2_i}{d^2_j}+1}\geq \frac{d_i}{d_j}+\frac{\delta}{\sqrt{\Delta^2+\delta^2}+\Delta}\geq 1+\frac{\delta}{\sqrt{\Delta^2+\delta^2}+\Delta}$$
with equality if and only if $\Delta=\delta$. 

\vspace*{3mm}

Using this, we obtain
\begin{align*}
HSO(G)=\sum\limits_{v_iv_j\in E(G)}\,\frac{\sqrt{d^{2}_i+d^{2}_j}}{\min\{d_i,\,d_j\}}&\geq \sum\limits_{v_iv_j\in E(G)}\,\Big(1+\frac{\delta}{\sqrt{\Delta^2+\delta^2}+\Delta}\Big)\\[3mm]
&=\Big(1+\frac{\delta}{\sqrt{\Delta^2+\delta^2}+\Delta}\Big)\,m.
\end{align*}
Moreover, the above equality holds if and only if $\Delta=\delta$, that is, if and only if $G$ is a regular graph.

\vspace*{3mm}

\noindent
{\bf Upper Bound:} Using (\ref{1w1}), we obtain
\begin{align*}
\sqrt{\frac{d^2_i}{d^2_j}+1}-\frac{d_i}{d_j}&=\frac{1}{\sqrt{\frac{d^2_i}{d^2_j}+1}+\frac{d_i}{d_j}}\leq \frac{1}{\sqrt{2}+1}=\sqrt{2}-1,
\end{align*}
which implies that
   $$\sqrt{\frac{d^2_i}{d^2_j}+1}\leq \frac{d_i}{d_j}+\sqrt{2}-1\leq \frac{\Delta}{\delta}+\sqrt{2}-1$$
with equality if and only if $\Delta=\delta$. 

\vspace*{3mm}

Using this, we obtain
\begin{align*}
HSO(G)&=\sum\limits_{v_iv_j\in E(G)}\,\frac{\sqrt{d^{2}_i+d^{2}_j}}{\min\{d_i,\,d_j\}}\\[1mm]
&\leq \sum\limits_{v_iv_j\in E(G)}\,\Big(\frac{\Delta}{\delta}+\sqrt{2}-1\Big)=\Big(\frac{\Delta}{\delta}+\sqrt{2}-1\Big)\,m.
\end{align*}
Moreover, the above equality holds if and only if $\Delta=\delta$, that is, if and only if $G$ is a regular graph.
This completes the proof of the theorem.
\end{proof}

\section{Concluding Remarks}\label{section5}

Very recently, Barman et al. \cite{BD1} introduced the Hyperbolic Sombor index of a graph $G$ and established several related mathematical results. However, some of the proofs presented in their work contain inaccuracies. In this paper, we address and correct those errors, and further contribute by deriving new results concerning the Hyperbolic Sombor index for various classes of graphs, including trees, unicyclic graphs, and bicyclic graphs. Moreover, we presented some lower and upper bounds for $HSO(G)$ in terms of the number of edges, maximum degree and minimum degree, and we charcterized the graphs that attains these bounds.

\vspace{2mm}

There remain several unexplored directions in this area, which can be the focus of future research. In particular, we propose the following open problems:

\vspace{2mm}

\noindent
\textbf{Problem 1.} Determine the maximal graphs with respect to the Hyperbolic Sombor index when the graph order $n$ and the number of pendent vertices $p$ are fixed.

\vspace{2mm}

\noindent
\textbf{Problem 2.} Determine the maximal graphs with respect to the Hyperbolic Sombor index when the graph order $n$ and the chromatic number $k$ are fixed.

\vspace{2mm}
From {\bf Remark 4}, we present the following conjecture.

\vspace{2mm}
\noindent
\textbf{Conjecture 1.} For any connected graph $G$, $HSO(G)\leq HSO(S_n)$.







\end{document}